\newtheorem{theorem}{Theorem}[section]
\newtheorem*{theorem*}{Theorem}
\newtheorem{proposition}[theorem]{Proposition}
\newtheorem{lemma}[theorem]{Lemma}
\newtheorem*{lemma*}{Lemma}
\newtheorem*{claim*}{Claim}
\theoremstyle{definition}
\newtheorem*{definition}{Definition}
\newtheorem{remark}[theorem]{Remark}
\newtheorem{example}[theorem]{Example}
\newtheorem*{remark*}{Remark}
\newcommand{\N}{\mathbf{N}}
\newcommand{\Q}{\mathbf{Q}}
\newcommand{\F}{\mathbf{F}}
\renewcommand{\epsilon}{\varepsilon}
\DeclareMathOperator{\Sym}{Sym}
\DeclareMathOperator{\Hom}{Hom}
\DeclareMathOperator{\End}{End}
\DeclareMathOperator{\sgn}{sgn}
\renewcommand{\theta}{\vartheta}
\newcounter{thmlistcnt}
	{\setcounter{thmlistcnt}{0}%
	\begin{list}{\emph{(\roman{thmlistcnt})}}{%
		\usecounter{thmlistcnt}%
		\setlength{\topsep}{0pt}%
		\setlength{\leftmargin}{0pt}%
		\setlength{\itemsep}{0pt}%
		\setlength{\labelwidth}{17pt}
		\setlength{\itemindent}{30pt}}%
	}%
	{\end{list}}%
\newcommand{\size}[1]{\vert #1 \vert}
\newcommand{\nat}{E^{\otimes r}}
\numberwithin{equation}{section}
\begin{document}
\title[]{Endomorphism algebras of 2-row permutation modules in characteristic~3}
\author{Jasdeep Kochhar}
\address{Department of Mathematics, Royal Holloway, University of London, United Kingdom}
\email{jasdeep.kochhar.2015@rhul.ac.uk}
\keywords{Permutation modules; Schur algebra; endomorphism algebras; primitive idempotents}
\subjclass[2010]{20C20; 20C32}
\begin{abstract}
Given $r \in \mathbf{N},$ let $\lambda$ be a partition of $r$ with at most two parts.
Let $\mathbf{F}$ be a field of characteristic 3.  
Write $M^\lambda$ for the $\mathbf{F}S_r$-permutation module corresponding to the action of the symmetric group $S_r$ on the cosets of the maximal Young subgroup $S_\lambda.$ 
We construct a full set of central primitive idempotents in $\End_{\F S_r}(M^\lambda)$ in this case. 
We also determine the Young module corresponding to each primitive idempotent that we construct.
\end{abstract}
\date{\today}
\maketitle
\section{Introduction}
Given $r \in \N,$ let $S_r$ denote the symmetric group on $r$ letters.
Let $\lambda$ be a partition of $r,$ and write $S_\lambda$ for the Young subgroup of $S_r$ corresponding to $\lambda.$
Given a field $\F$, denote by $M^\lambda$ the $\F S_r$-permutation module corresponding to the action of $S_r$ on the cosets of $S_\lambda.$
The modules $M^\lambda$ are of central interest in the representation theory of the symmetric group. 
Over any field the Specht module $S^\lambda$ can be defined as a submodule of $M^\lambda.$
It is known that over the rational field the Specht modules are the irreducible $\Q S_n$-modules (see for instance \cite[\textsection 4]{J}). 

Our case of interest is when $\F$ is a field of positive characteristic. 
In this case James' Submodule Theorem \cite[Theorem 4.8]{J} implies that, up to isomorphism, there is a unique indecomposable summand of $M^\lambda$ containing $S^\lambda.$
We write $Y^\lambda$ for this summand, and we refer to this module as the \emph{Young module} labelled by $\lambda.$ 
Write $\unrhd$ for the dominance order of partitions. 
It is known (see \cite[Theorem 1]{E}) that $M^\lambda$ is in general isomorphic to a direct sum of Young modules $Y^\mu$ such that $\mu \unrhd \lambda.$
We can therefore write
\[M^\lambda \cong Y^\lambda \oplus \bigoplus_{\mu \rhd \lambda} [M^\lambda : Y^\mu] Y^\mu,\]
where $[M^\lambda : Y^\mu]$ denotes the number of indecomposable summands of $M^\lambda$ isomorphic to $Y^\mu.$
We refer to the multiplicity $[M^\lambda : Y^\mu]$ as a \emph{$p$-Kostka number.}
A complete characterisation of the $p$-Kostka numbers appears to be out of reach, however we mention a significant case in which they are known.
The modules in this case are those that we consider in this paper. 

Let $\lambda$ and $\mu$ be partitions of $r$ in at most two parts such that $\mu \unrhd \lambda.$
Write $\lambda = (\lambda_1,\lambda_2)$ and $\mu = (\mu_1,\mu_2).$ 
Define $m = \lambda_1-\lambda_2$ and $g = \lambda_2-\mu_2.$
Observe that $m \ge 0$ as $\lambda$ is a partition, and $g \ge 0$ as $\mu \unrhd \lambda.$
The main theorem in \cite{H} is that the $p$-Kostka number $[M^{(\lambda_1,\lambda_2)} : Y^{(\mu_1,\mu_2)}]$ is non-zero if and only if the binomial coefficient 
\[B(m,g) := \binom{m+2g}{g}\]
is non-zero modulo $p.$ 
This result is proved using a result of Donkin \cite[(3.6)]{DonkinTilting} based on Klyachko's multiplicity formula \cite[Corollary 9.2]{Klyachko}.
In the case that $Y^{(\mu_1,\mu_2)}$ is a summand of $M^{(\lambda_1,\lambda_2)},$ it also proved that the corresponding $p$-Kostka number equals one (see \cite[Lemma 3.2]{H}). 

In \cite{DEH} it is shown that the binomial coefficient $B(m,g)$ can also be used to construct the central primitive idempotents in $S_\F(\lambda):=\End_{\F S_r}(M^{(\lambda_1,\lambda_2)}),$ where $\F$ is a field of characteristic 2. 
The first main result in this paper is Theorem \ref{thm: main}, which constructs the central primitive idempotents in $S_\F(\lambda)$ when $\F$ is a field of characteristic 3. 
Our second main result is Theorem \ref{thm: correspondence}, which determines the Young modules that the primitive idempotents constructed in Theorem \ref{thm: main} correspond to.
This gives a construction of the Young modules $Y^{(\mu_1,\mu_2)}$ over a field of characteristic 3.

We remark that the proofs of our main theorems utilise various ideas from \cite{DEH}. 
Indeed in \cite{DEH} the authors give a presentation of $S_\F(\lambda)$ for any field $\F.$ 
We use the basis and corresponding multiplication formula in this presentation to construct the primitive idempotents in our case. 
Our construction of the primitive idempotents in $S_\F(\lambda)$ uses the same idea as \cite{DEH} of giving a correspondence between particular elements of $S_\F(\lambda)$ and the binomial coefficients $\binom{a}{b}$ such that $0 \le b \le a < p.$ 
The number of binomial coefficients of this form clearly increases with $p,$ and so it seems difficult to determine such a correspondence for fields of characteristic $p \ge 5.$ 
It is remarked in \cite[\S 1]{DEH} that explicitly constructing the primitive idempotents appears difficult even when $p = 3.$ 
As demonstrated by our main theorems, we completely solve the problem in this case. 
We also note that the argument used to prove that the idempotents we construct are primitive is based on the counting argument in \S 2.4 of \cite{DEH}.
Moreover, the proof of Theorem \ref{thm: correspondence} is taken directly from the proof of Theorem 7.1 in \cite{DEH}.
We repeat the proof here in order to make this paper more self-contained. 

We now describe where our ideas differ to those in \cite{DEH}. 
We will see in Lemma \ref{lem: mult} below that
the multiplication structure of $S_{\F}(\lambda)$ depends only on $m,$ 
whereas our construction of the primitive idempotents depends on $B(m,g).$
We are therefore required to determine the critical parameter $m$ given $m+2g$ and $g.$ 
An important observation in \cite{DEH} is that if $g$ has binary expansion $g = \sum_{i \ge 0} g_i2^i,$ then $2g$ has binary expansion $2g = \sum_{i \ge 1} g_{i-1}2^i.$ 
Furthermore, the proof of the Idempotent Theorem in \cite{DEH} uses that the sum of any two idempotents is an idempotent over a field of characteristic 2.
These two observations only hold when $p = 2,$ and so we take a different approach when proving the analogous results in our case (see \S \ref{sec: carries} and \S \ref{sec: proof1}).

In order to state our main theorems, we require background on the Schur algebra, which we give in \textsection \ref{sec: maintheorems}.
For details on the various connections between the representation theories of the symmetric group and the general linear group via the Schur algebra, we refer the reader to \cite{Green} and \cite{Martin}.  
\subsection{The Schur algebra}\label{sec: maintheorems}
Given $n, r \in \N,$ fix an $n$-dimensional $\F$-vector space $E$ with basis $\{v_1,\ldots,v_n\}.$ 
Form the $r$-fold tensor product $\nat,$ on which the symmetric group $S_r$ acts by place permutation. 
Extend this action linearly to the group algebra $\F S_r,$ and then define the \emph{Schur algebra} 
\[S_\F(n,r) = \End_{\F S_r}(\nat).\]
Given a partition $\lambda$ of $r,$ we realise the permutation module $M^{\lambda}$ as an $\F S_r$-submodule of $\nat.$
Define 
\[I(n,r) = \{(i_1,\ldots,i_r) : i_j \in \{1,2,\ldots,n\} \mbox{ for all $j$}\}.\]
We say that $(i_1,\ldots,i_r) \in I(n,r)$ has \emph{weight $\lambda$} if 
\[\size{\{j : i_j = k\}} = \lambda_k,\]
for all $1 \le k \le \ell(\lambda).$ 
For instance, the elements in $I(2,3)$ of weight $(2,1)$ are 
\[(1,1,2),(1,2,1) \mbox{ and } (2,1,1).\]
Then $M^\lambda$ is isomorphic to the $\F$-span of the set
\[\{v_{i_1} \otimes \cdots \otimes v_{i_r} : (i_1,i_2,\ldots,i_r) \mbox{ has weight $\lambda$}\}.\]
We remark that there is the analogous construction of $M^\lambda$ when $\lambda$ is a composition of $r.$
It is then easy to see that for a composition $\lambda$ of $r,$ there is an isomorphism of $\F S_r$-modules $M^\lambda \cong M^{\overline\lambda},$ where $\overline{\lambda}$ is the partition of $r$ obtained by writing the parts of $\lambda$ in non-increasing order. 
Then there is a decomposition of $\F S_r$-modules
\[\nat = \bigoplus_{\lambda \in \Lambda(n,r)} M^{\lambda},\]
where $\Lambda(n,r)$ denotes the set of compositions of $r$ with at most $n$ parts. 

We are interested in partitions of $r$ with at most two parts, and so we fix $n = 2$ throughout the rest of the paper.  
The main result in \cite{DG} is an explicit presentation of $S_\Q(2,r)$ as a quotient of the universal enveloping algebra $U(\mathfrak{gl}_2).$ 
This result can be used to give an explicit presentation of the endomorphism algebra $S_\F(\lambda),$ which we now describe. 
Following the notation in \cite{DEH} and \cite{DG}, define $e = e_{21}, f = e_{12}, H_1 = e_{11},$ and $H_2 = e_{22},$ where $e_{ij}$ is the standard matrix unit in $\mathfrak{gl}_2.$  
As in \cite[3.4]{DG}, given $\ell \in \N_0$ and an element $T$ in an associative $\Q$-algebra with 1, we define 
\[T^{(\ell)} = \frac{T^\ell}{\ell!} \mbox{ and } \binom{T}{\ell} = \dfrac{T(T-1)\ldots(T-\ell+1)}{\ell!}.\]
Then given $\lambda = (\lambda_1,\lambda_2) \in \Lambda(2,r),$ define
\[1_\lambda = \binom{H_1}{\lambda_1}\binom{H_2}{\lambda_2}.\]
It is proved in \cite[Lemma 5.3]{DEH2} that $1_\lambda$ is an idempotent in $S_\Q(2,r),$ and that $1_\lambda E^{\otimes r} = M^{\lambda}.$
Given $i \in \N_0,$ we define 
\[b(i) = 1_\lambda f^{(i)}e^{(i)} 1_\lambda.\]

The following lemma completely describes $S_\F(\lambda)$ as an associative $\F$-algebra. 
We remark that this lemma is an equivalent restatement of Proposition 3.6 in \cite{DEH}, chosen to make it obvious that $S_\F(\lambda)$ is commutative. 
\begin{lemma}[{\cite[Proposition 3.6]{DEH}}]\label{lem: mult}
Given $r \in \N,$ let $\lambda = (\lambda_1,\lambda_2) \vdash r,$ and define $m = \lambda_1-\lambda_2.$  
Then $S_\F(\lambda) $ has an $\F$-basis given by the set 
\[\{ b(i) : 0 \le i \le \lambda_2\}.\]
Moreover, the multiplication of the basis elements is given by the formula
\[b(i)b(j) = \sum_{h = \max\{i,j\}}^{i+j} \binom{h}{i}\binom{h}{j}\binom{m+i+j}{i+j-h} b(h),\]
where we set $b(a) = 0$ if $a > \lambda_2.$ 
\end{lemma}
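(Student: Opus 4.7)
The plan is to work inside the Kostant $\Z$-form of $U(\mathfrak{gl}_2)$ and its quotient, the integral Schur algebra $S_\Z(2,r)$, and then reduce modulo $p$. Here $1_\lambda$ is the idempotent projecting $\nat$ onto the weight-$\lambda$ subspace $M^\lambda$, so every $b(i)$ lives inside $1_\lambda S_\F(2,r) 1_\lambda \cong \End_{\F S_r}(M^\lambda)$, and the Cartan element $H_1 - H_2$ acts as the scalar $m$ on $M^\lambda$. It is this single scalar evaluation that will convert the abstract commutation relations in $U(\mathfrak{gl}_2)$ into the binomial coefficients in the multiplication formula.

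For the basis claim, I would first note that $\dim_\F \End_{\F S_r}(M^\lambda)$ equals the number of $(S_\lambda, S_\lambda)$-double cosets in $S_r$; for $\lambda = (\lambda_1, \lambda_2)$ these are parametrised by the number of letters exchanged between the two blocks, giving exactly $\lambda_2 + 1$. It is therefore enough to show that $b(0), \ldots, b(\lambda_2)$ are linearly independent. Working first over $\Q$, this can be done by a triangularity argument: expand each $b(i)$ in the double coset basis and observe that $b(i)$ has a non-zero contribution on the representative that swaps exactly $i$ letters between the two blocks, whereas $b(j)$ with $j < i$ does not reach that coset. Independence over $\F$ then follows from the integral structure afforded by the Kostant form.

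The heart of the proof is the multiplication formula, which I would obtain by a single application of the standard Kostant-form identity
\[ f^{(b)} e^{(a)} = \sum_{t \ge 0} e^{(a-t)} \binom{H_1 - H_2 - a - b + 2t}{t} f^{(b-t)}, \]
together with the divided-power relations $f^{(a)} f^{(b)} = \binom{a+b}{a} f^{(a+b)}$ and its analogue for $e$. Starting from $b(i) b(j) = 1_\lambda f^{(i)} e^{(i)} f^{(j)} e^{(j)} 1_\lambda$ (using $1_\lambda^2 = 1_\lambda$), I would commute the middle $e$--$f$ pair via the identity above, collapse the resulting strings of divided powers of $e$ and $f$, and re-index by $h = i + j - t$. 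The main technical obstacle is the Cartan-binomial step: once $\binom{H_1 - H_2 - a - b + 2t}{t}$ has been wedged between the divided powers, one must evaluate it on the correct weight space, and check that the apparent $t$-dependence of that weight and the $+2t$ inside the binomial cancel so as to leave the clean scalar $\binom{m + i + j}{i + j - h}$. The range $\max\{i,j\} \le h \le i+j$ in the final sum is forced by the support of $t$ in the commutation relation, and the convention $b(a) = 0$ for $a > \lambda_2$ simply records the vanishing of $f^{(a)}$ on $M^\lambda$ once $a$ exceeds $\lambda_2$. Base change from $\Z$ to $\F$ finishes the argument over an arbitrary characteristic field.
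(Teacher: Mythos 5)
The paper does not actually prove this lemma: it is imported, up to a cosmetic restatement, from Proposition 3.6 of \cite{DEH}, so there is no in-paper argument to compare against. Your proposal reconstructs what is essentially the standard proof and the strategy is sound. The dimension count is correct: the $(S_\lambda,S_\lambda)$-double cosets are parametrised by $2\times 2$ matrices with both row and column sums $(\lambda_1,\lambda_2)$, hence by the single entry $k$ with $0\le k\le\lambda_2$. The multiplication formula does drop out of the divided-power commutation identity exactly as you describe: the weight reached by $e^{(i-t)}e^{(j)}$ from $M^\lambda$ is $\lambda+(i+j-t)(1,-1)$, on which $H_1-H_2$ acts as $m+2(i+j-t)$, so the Cartan binomial collapses to $\binom{m+i+j}{t}=\binom{m+i+j}{i+j-h}$ after re-indexing $h=i+j-t$, while $f^{(i)}f^{(j-t)}=\binom{h}{i}f^{(h)}$ and $e^{(i-t)}e^{(j)}=\binom{h}{j}e^{(h)}$ supply the other two factors, and $t\le\min\{i,j\}$ forces $h\ge\max\{i,j\}$.

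Three details need tightening. First, the pair you must commute in $1_\lambda f^{(i)}e^{(i)}f^{(j)}e^{(j)}1_\lambda$ is $e^{(i)}f^{(j)}$, so the identity should be taken in the form $e^{(a)}f^{(b)}=\sum_{t}f^{(b-t)}\binom{H_1-H_2-a-b+2t}{t}e^{(a-t)}$ (the version the paper itself invokes from Humphreys \S 26.2 in \S 6.1), not the transposed $f^{(b)}e^{(a)}$ version you wrote down; as stated your identity does not apply to the middle pair. Second, for linear independence over $\F$ it is not enough that the leading double-coset coefficient of $b(i)$ be non-zero over $\Q$ together with an appeal to integrality: a rational unit need not be a unit mod $p$. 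You need the triangular transition matrix to the double-coset basis to have unit diagonal entries over $\Z$; a short computation shows each leading coefficient is in fact $1$, so the $b(i)$ form a $\Z$-basis of the integral endomorphism ring and hence an $\F$-basis after base change — this should be said explicitly. Third, with the paper's conventions ($e\,1_\nu=1_{\nu+(1,-1)}e$, so $e$ raises the weight), the vanishing $b(a)=0$ for $a>\lambda_2$ comes from $e^{(a)}1_\lambda=0$, not from $f^{(a)}$; this is only a bookkeeping point, but it should be made consistent with the conventions used in \S 6 of the paper.
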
 
We refer to the basis given in this lemma as the \emph{canonical basis} of $S_\F(\lambda).$ 
The presentation of the Schur algebra in \cite{DG} is over the field $\Q.$
Nevertheless $b(i)$ is well-defined over a field of characteristic $p.$
Moreover, the structure constants given in Lemma \ref{lem: mult} are integers. 
Therefore the above multiplication formula holds over a field of characteristic $p$ by reducing the coefficients modulo $p.$ 
Furthermore, the $\Q S_r$-module $M^\lambda$ is multiplicity free, and so $S_\Q(\lambda)$ is a commutative algebra.
This implies that $S_\F(\lambda)$ is also a commutative algebra. 
Also a direct computation using the multiplication formula shows that $b(0)$ is the identity in $S_\F(\lambda),$
and we write $\mathbf{1}$ for $b(0).$ 

Throughout the rest of this section, we assume that $\F$ is a field of characteristic 3. 
We now define the elements $e_{m,g} \in S_\F(\lambda),$ which are the subject of Theorem \ref{thm: main} (see below).
Let $m,g \in \N_0$ be such that $B(m,g)$ is non-zero modulo $3.$ 
Let $m+2g = \sum_{u \ge 0} (m+2g)_u 3^u$ and $g = \sum_{u \ge 0} g_u 3^u$ be the 3-adic expansions of $m+2g$ and $g,$ respectively. 
Define the index sets
\begin{align*}
I^{(0)}_{m,g} &= \{u : g_u = 0\mbox{ and }(m+2g)_u = 0\}\\ 
J^{(0)}_{m,g} &= \{u : g_u = 1\mbox{ and }(m+2g)_u = 2\}\\
I^{(1)}_{m,g} &= \{u : g_u = 0\mbox{ and }(m+2g)_u = 1\}\\ 
J^{(1)}_{m,g} &= \{u : g_u = 2\mbox{ and }(m+2g)_u = 2\}\\
I^{(2)}_{m,g} &= \{u : g_u = 0\mbox{ and }(m+2g)_u = 2\}\\ 
J^{(2)}_{m,g} &= \{u : g_u = 1\mbox{ and }(m+2g)_u = 1\}.
\end{align*}
The chosen notation for these index sets may not seem intuitive upon first reading, 
but the results in \textsection \ref{sec: carries} will make this clear. 

Define 
\begin{align*}
e_{m,g} &= \prod_{u \in I^{(0)}_{m,g}} \left(\mathbf{1} + b(3^u)- b(2\cdot 3^u)\right) \cdot \prod_{u \in J^{(0)}_{m,g}} \left(b(2\cdot 3^u) - b(3^u)\right)\\
&\cdot \prod_{u \in I^{(1)}_{m,g}} \left(\mathbf{1} - b(2\cdot 3^u)\right)\cdot \prod_{u \in J^{(1)}_{m,g}} b(2\cdot 3^u) \\
&\cdot \prod_{u \in I^{(2)}_{m,g}} \left(\mathbf{1} - b(3^u) + b(2\cdot 3^u)\right) \cdot \prod_{u \in J^{(2)}_{m,g}} \left(b(3^u) - b(2\cdot 3^u)\right).
\end{align*}
As stated in Lemma \ref{lem: mult}, if $b(a)$ in this product is such that $a > \lambda_2,$ then we set $b(a) = 0.$
Given $t \in \N_0,$ we define $(e_{m,g})_{\le t}$ by taking the products defining $e_{m,g}$ over the $u$ in each index set such that $u \le t,$ 
and we define $(e_{m,g})_{< t}$ in the analogous way. 
We give an example of $e_{m,g}$ in \textsection \ref{sec: notation}. 

We are now ready to state our first main theorem. 
\begin{theorem}\label{thm: main} 
Given $r \in \N,$ let $\lambda = (\lambda_1,\lambda_2) \vdash r$ and $m = \lambda_1-\lambda_2.$
The set of elements $e_{m,g}$, with $B(m,g)$ non-zero modulo $3$ and $g \le \lambda_2,$ is a complete set of primitive orthogonal idempotents for $S_\F(\lambda)$. 
\end{theorem}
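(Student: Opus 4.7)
The plan is to verify four assertions: (a) each $e_{m,g}$ is an idempotent, (b) $e_{m,g}\,e_{m,g'}=0$ for $g\ne g'$, (c) $\sum_g e_{m,g}=\mathbf{1}$, and (d) each $e_{m,g}\ne 0$. Once (a)--(d) hold, the primitivity claim follows by the counting argument of \cite[\S 2.4]{DEH}: Henke's theorem identifies the number of indecomposable summands of $M^\lambda$ with $\#\{g\in\{0,\dots,\lambda_2\}:3\nmid B(m,g)\}$, and since $S_\F(\lambda)$ is commutative, any decomposition of $\mathbf{1}$ into that many non-zero orthogonal idempotents must be a primitive decomposition.

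The key structural observation is that each of the six factor types appearing in $e_{m,g}$ depends only on $b(3^u)$ and $b(2\cdot 3^u)$ for a single index $u$, and thus lies in the commutative subalgebra $A_u\subseteq S_\F(\lambda)$ generated by $\mathbf{1},b(3^u),b(2\cdot 3^u)$. Commutativity of $S_\F(\lambda)$ therefore reduces (a) and (b) to computations one position at a time. For fixed $u$, the multiplication formula in Lemma \ref{lem: mult} together with Lucas' theorem pins down the products $b(3^u)^2$, $b(3^u)b(2\cdot 3^u)$, and $b(2\cdot 3^u)^2$ modulo $3$ as explicit expressions in the 3-adic digit of $m$ at position $u$ and tail terms $b(k)$ with $k\ge 3^{u+1}$. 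I would tabulate these products and then check (i) each of the six factor-polynomials is idempotent modulo the ideal $I_u=\langle b(k):k\ge 3^{u+1}\rangle$, and (ii) any two factor-polynomials coming from different index-set families at the same $u$ multiply to $0$ modulo $I_u$. An induction on $u$, assembling these local data through the commuting subalgebras $A_u$, then yields both (a) and (b).

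The more delicate step is (c), since the characteristic-$2$ trick exploited in \cite{DEH} (sums of orthogonal idempotents remaining idempotent) is unavailable. The approach I would take is to fix $m$ and read off, using the digit arithmetic of \S\ref{sec: carries}, precisely which pairs $(g_u,(m+2g)_u)$ can arise as $g$ ranges over $\{0,\dots,\lambda_2\}$ with $3\nmid B(m,g)$; the six index-set types are exactly the six legal digit pairs forced by that non-vanishing condition. One then sums the local contributions at each position $u$, reducing to a six-term identity in $A_u/I_u$ that collapses to $\mathbf{1}$ by direct computation with the mod-$3$ multiplication data already assembled. Multiplying these local identities across all $u$ produces $\sum_g e_{m,g}=\mathbf{1}$. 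Assertion (d) is then a short check using the top-degree term in the canonical-basis expansion of $e_{m,g}$.

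The main obstacle is bookkeeping: tracking which factor contributes what in each product, how carries in the 3-adic addition $g+(m+g)=m+2g$ propagate through the structure constants of Lemma \ref{lem: mult}, and reducing the resulting binomials modulo $3$ by Lucas' theorem. The six-type labelling in the definition of $e_{m,g}$ is designed precisely to absorb these carries, which is what allows the induction to close; the preparatory digit-arithmetic results of \S\ref{sec: carries} supply the identities needed to make the induction and the sum-to-one computation work uniformly.
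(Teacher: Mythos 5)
Your skeleton (idempotency, orthogonality, non-vanishing, then the counting argument of \cite[\S 2.4]{DEH} to deduce primitivity and completeness from Henke's count of indecomposable summands) matches the paper's, and that final counting step is right; note it also yields $\sum_g e_{m,g}=\mathbf{1}$ for free, so your step (c) is not actually needed. The genuine gap is in your local mechanism for (a) and (b). The correction terms in $b(3^u)^2$, $b(3^u)b(2\cdot 3^u)$ and $b(2\cdot 3^u)^2$ do \emph{not} lie in the ideal generated by $\{b(k):k\ge 3^{u+1}\}$: by Lemma \ref{lem: highestdegree} everything already lives below $3^{u+1}$, and the actual corrections are the cross terms $b(3^u)\psi_{m,u}$ and $b(2\cdot 3^u)\psi_{m,u}$ with $\psi_{m,u}=\sum_{k=1}^{3^u-1}\binom{m_{<u}}{3^u-k}b(k)$, supported on indices strictly between $3^u$ and $3^{u+1}$ but in no useful ideal. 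Consequently the individual factor-polynomials are not idempotent modulo your $I_u$ (for instance $b(2\cdot3^u)^2=b(2\cdot3^u)\bigl[\binom{m_u+1}{2}+\binom{m_u+1}{1}\psi_{m,u}\bigr]$ retains a $\psi_{m,u}$ contribution that is nonzero in general), and two factors from complementary families do not multiply to zero locally. These corrections are only killed or absorbed after multiplying by the prefix $(e_{m,g})_{<u}$: the paper's key Lemma \ref{lem: finalfactor} shows that $(e_{m,g})_{<u}\psi_{m,u}$ equals $0$ or $(e_{m,g})_{<u}$ according to the carry $x_{u-1}$ entering column $u$, and this carry-dependent absorption is exactly what lets the induction on $u$ close. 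Your proposal gestures at the carries but supplies no mechanism coupling position $u$ to the earlier positions; without such a lemma the ``one position at a time'' computation fails.

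Two secondary problems. First, your factorized sum-to-one argument cannot work as stated: the admissible digit pairs at position $u$ depend on the carry $x_{u-1}$, hence on the earlier digits of $g$, so the admissible $g$ do not form a product set over positions and $\sum_g\prod_u$ does not factor as $\prod_u\sum$. Moreover the six factor-polynomials at a fixed $u$ sum to $3\cdot\mathbf{1}=0$ in characteristic $3$, not to $\mathbf{1}$; the relevant identity is that the \emph{two} factors admissible for a given carry state are $w$ and $\mathbf{1}-w$. Second, non-vanishing is read off from the lowest term $B(m,g)b(g)$ of $e_{m,g}$ (Lemma \ref{lem: smallestinemg}), not from the top-degree term, which may be truncated to zero once its index exceeds $\lambda_2$.
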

Theorem \ref{thm: main} implies that $e_{m,g}M^\lambda \cong Y^{\mu}$ for some $\mu = (\mu_1,\mu_2) \vdash r$ such that $\mu \unrhd \lambda.$
Our second main theorem determines $\mu$ in this case. 
\begin{theorem}\label{thm: correspondence}
Let $\lambda = (\lambda_1, \lambda_2)$ and $\mu = (\mu_1, \mu_2)$ be partitions of $r$ such that $Y^\mu$ is a direct summand of $M^\lambda$. 
Define 
\[m= \lambda_1-\lambda_2 \mbox{ and } g=\lambda_2 - \mu_2.\] 
Then $e_{m,g}$ is the primitive idempotent in $S_\F(\lambda)$ such that $e_{m,g}M^\lambda \cong Y^\mu.$
\end{theorem}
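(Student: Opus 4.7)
The plan is to use Theorem \ref{thm: main} to see that $e_{m,g}$ is a primitive idempotent in $\End_{\F S_r}(M^\lambda)$, so $e_{m,g} M^\lambda$ is an indecomposable direct summand of $M^\lambda$ and thus isomorphic to $Y^\nu$ for some $\nu \unrhd \lambda$. The content of the theorem is to identify $\nu = \mu$, where $\mu_2 = \lambda_2 - g$.

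Hemmer's result recalled in the introduction says that each $Y^\nu$ which appears in $M^\lambda$ appears with multiplicity one, and Theorem \ref{thm: main} produces exactly one primitive idempotent $e_{m,g}$ for each valid $g$. So the rule $g \mapsto \nu(g)$ determined by $e_{m,g} M^\lambda \cong Y^{\nu(g)}$ is already forced to be a bijection; my job is to pin it down. I would do so by constructing, for each valid $g$, a distinguishing nonzero $\F S_r$-equivariant homomorphism $\varphi_g \colon M^\lambda \to M^\mu$ (with $\mu = (\lambda_1+g, \lambda_2-g)$) that does not vanish on $e_{m,g} M^\lambda$ and whose image contains the Specht module $S^\mu \subseteq M^\mu$. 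The natural candidate is the divided-power raising map $f^{(g)}$ coming from the Schur algebra $S_\F(2,r)$, which lives in that larger algebra, restricts to a map $M^\lambda \to M^\mu$, is $\F S_r$-equivariant, and classically realises $S^\mu$ in its image. Granted the non-vanishing, the restriction $f^{(g)}|_{e_{m,g}M^\lambda}$ is a nonzero map from an indecomposable Young module to $M^\mu$ whose image contains $S^\mu$; since $Y^\mu$ is the unique indecomposable summand of $M^\mu$ containing $S^\mu$, this combined with the bijection above forces $e_{m,g} M^\lambda \cong Y^\mu$.

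The main obstacle is the non-vanishing $f^{(g)} \circ e_{m,g} \neq 0$. This is a statement about a specific element of $\Hom_{\F S_r}(M^\lambda, M^\mu)$. Reducing it to a computation inside $S_\F(\lambda)$ via composition with the lowering divided power $e^{(g)}$, and expanding through Lemma \ref{lem: mult}, expresses the answer as a sum of binomial coefficients which must be evaluated modulo $3$. The index sets $I^{(j)}_{m,g}$ and $J^{(j)}_{m,g}$ of \S\ref{sec: maintheorems} are defined precisely so that Lucas' theorem, applied to the $3$-adic expansions of $g$ and $m+2g$, makes these coefficients survive modulo $3$ exactly when $B(m,g) \not\equiv 0 \pmod 3$ — which is the hypothesis guaranteeing that $Y^\mu$ is a summand of $M^\lambda$ at all. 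Carrying out this $3$-adic bookkeeping is the technical heart of the proof, and follows the blueprint of \cite[Theorem 7.1]{DEH}.
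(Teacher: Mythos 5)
Your use of Theorem \ref{thm: main} to reduce the problem to identifying \emph{which} Young module $e_{m,g}M^\lambda$ is, is the right starting point, but the identification step has a genuine gap. Showing that the composite of $e_{m,g}$ with the divided-power raising map $M^\lambda\to M^\mu$ (in the paper's conventions this is $e^{(g)}$, not $f^{(g)}$, which lowers) is nonzero only tells you that $\Hom_{\F S_r}(e_{m,g}M^\lambda, M^\mu)\neq 0$, and this does not determine the summand: $\Hom_{\F S_r}(Y^\nu, M^\mu)$ is nonzero for many $\nu$. Already for $g=0$ the raising map is the identity and \emph{every} summand of $M^\lambda$ survives composition with it; more generally $1_\lambda f^{(g)}e^{(g)}1_\lambda=b(g)$ and $b(g)e_{m,g'}\neq 0$ for many $g'\neq g$. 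Your patch --- that the image of the restriction to $e_{m,g}M^\lambda$ contains $S^\mu$ --- is asserted but not proved (you only argue that the image of the full map on $M^\lambda$ contains $S^\mu$), and even granted it would not immediately force $e_{m,g}M^\lambda\cong Y^\mu$, since a quotient of $Y^\nu$ containing a copy of $S^\mu$ does not pin down $\nu$. The appeal to the bijection between valid $g$ and summands does not rescue this: the bijection exists, but nothing in your argument forces it to be the identity on $g$. Finally, the ``$3$-adic bookkeeping'' that is the claimed technical heart is only gestured at, so even the non-vanishing you rely on is not actually established.

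The paper uses the Specht module in the dual (and logically watertight) way: since $\Hom_{\F S_r}(S^\mu, M^\lambda)$ is one-dimensional (\cite[Theorem 13.13]{J}) and $\Hom_{\F S_r}(S^\mu,Y^\mu)\neq 0$, the unique copy of $S^\mu$ \emph{inside} $M^\lambda$ lies in the summand isomorphic to $Y^\mu$; hence it suffices to prove $e_{m,g}(S^\mu)\neq 0$, i.e.\ that $e_{m,g}$ does not annihilate a generating polytabloid. This is proved by induction on $r$: the base case $\mu=(r,0)$ is handled by exhibiting $u,v\in S_\F(2,r)$ with $uv=e_{m,g}$ and $vu=1_{(r,0)}$ (using that $e_{m,g}=B(m,g)b(g)$ in $S_\F((m+g,g))$ by Lemma \ref{lem: smallestinemg}, together with Scott's conjugate-idempotent criterion), and the inductive step uses the column-removal embedding $j(x)=(v_1\otimes v_2-v_2\otimes v_1)\otimes x$, which commutes with $e_{m,g}$ (Lemma \ref{lem: commutingj}) and carries the polytabloid generating $S^\mu\subseteq M^\lambda$ to the one generating $S^{\mu+(1^2)}\subseteq M^{\lambda+(1^2)}$. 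If you want to salvage a one-step (non-inductive) argument along your lines, you would need to control where the copy of $S^\mu$ sits relative to the decomposition of $M^\lambda$, not merely that some map out of $e_{m,g}M^\lambda$ is nonzero.
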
 
\subsection{Outline}\label{sec: outline}  
In \textsection \ref{sec: notation} we give the notation used throughout the paper.
We highlight that we define the $p$-adic expansion of a binomial coefficient $\binom{a}{b},$ as $e_{m,g}$ is implicitly constructed using the 3-adic expansion of $B(m,g).$ 

In \textsection \ref{sec: squares} we consider more closely the multiplication structure of $S_\F(\lambda).$
In particular, we define the element $\psi_{m,u},$ where $u \in \N_0.$
The product of $(e_{m,g})_{< u}$ (defined on the previous page) and $\psi_{m,u}$ is fundamental in the proof of Theorem \ref{thm: main}. 

As can be observed in Lemma \ref{lem: mult}, the critical parameter in the multiplication formula for $S_\F(\lambda)$ is $m.$ 
In \textsection \ref{sec: carries} we therefore relate the 3-adic expansion of $B(m,g)$ to the 3-adic expansion of $m.$
We see that this depends on the carries in the ternary addition of $m$ and $g.$ 

In \textsection \ref{sec: proof1} we prove Theorem \ref{thm: main}. 
We prove Proposition \ref{prop: emgu}, which states that the elements $(e_{m,g})_{\le u}$ are idempotents for all $u \in \N_0.$
Before we prove Proposition \ref{prop: emgu}, we show how it implies that the elements $e_{m,g}$ are idempotents in $S_\F(\lambda).$ 
The proof of Proposition \ref{prop: emgu} is by induction on $u.$ 
We give the base case in \textsection \ref{sec: base}, and we complete the inductive step in \textsection \ref{sec: inductivestep}.
In \textsection \ref{sec: primitive} we show that the elements $e_{m,g}$ are mutually orthogonal.
A simple counting argument then shows that these elements give a complete set of primitive orthogonal idempotents in $S_\F(\lambda),$ which completes the proof of Theorem \ref{thm: main}.   

In \textsection \ref{sec: proof2} we prove Theorem \ref{thm: correspondence}.
Following the exposition in \cite{DEH}, the proof by induction on $r.$
Observe that $m$ and $g$ are invariant under adding the partition $(1^2)$ to both $\lambda$ and $\mu$.
In the inductive step we therefore prove that if $e_{m,g}M^\lambda \cong Y^\mu,$ then $e_{m,g}M^{\lambda+(1^2)} \cong Y^{\mu + (1^2)}.$
We remark that this is an algebraic realisation of the column removal phenomenon for the decomposition matrices of symmetric groups proved by James (see \cite{JamesColRemoval}). 
\section{Notation}\label{sec: notation}
Let $p$ be a prime number.
Given $a \in \N_0$ with $p$-adic expansion $a = \sum_{u = 0}^t a_up^u,$ we write $a =_p [a_0,a_1,\ldots,a_t].$ 
Given $s \in \N,$ we write $a_{< s}$ for $\sum_{u = 0}^{s-1} a_up^u.$ 
Also given $b =_p [b_0,b_1,\ldots,b_t],$ Lucas' Theorem states that 
\[\binom{a}{b} \equiv_p \prod_{u = 0}^t \binom{a_u}{b_u}.\]
We refer to the factorisation on the right hand side as the \emph{$p$-adic expansion} of $\binom{a}{b}.$
Define \emph{factor u} in the $p$-adic expansion of $\binom{a}{b}$ as the binomial coefficient $\binom{a_u}{b_u}.$
Given $m,g \in \N_0,$ we write $B(m,g)_p$ for the $p$-adic expansion of $B(m,g).$ 

Recall from Lemma \ref{lem: mult} that $S_\F(\lambda)$ has an $\F$-basis equal to
\[\{ b(i) : 0 \le i \le \lambda_2\},\]
and that $\mathbf{1}$ denotes $b(0) = 1_{S_\F(\lambda)}.$ 
We also define the order $\le$ on the $b(i)$ by $b(i) \le b(j)$ if and only if $i \le j.$ 

We remark that we can define $e_{m,g}$ by assigning elements in $S_\F(\lambda)$ to all possible factors of $B(m,g)_3$, and then multiplying these elements of $S_\F(\lambda)$ according to the factors of $B(m,g)_3$ (see Example \ref{ex: main} below).
The assignment to factor $u$ of $B(m,g)_3$ is as follows: 
\begin{align*}
\binom{0}{0} & \leftrightarrow  \mathbf1+b(3^u)-b(2\cdot3^u) & \binom{2}{1} & \leftrightarrow b(2\cdot3^u) - b(3^u) \\
\binom{1}{0} & \leftrightarrow \mathbf1-b(2\cdot 3^u)& \binom{2}{2} & \leftrightarrow b(2\cdot 3^u)\\
\binom{2}{0} & \leftrightarrow \mathbf1-b(3^u)+b(2\cdot 3^u)& \binom{1}{1} & \leftrightarrow b(3^u) - b(2\cdot 3^u),
\end{align*}
and assigning zero to any other factor of $B(m,g)_3.$
Observe that if $B(m,g) = 0,$ then $e_{m,g} = 0$ according to this definition.
We define \emph{factor u} of $e_{m,g}$ as the factor of $e_{m,g}$ corresponding to factor $u$ of $B(m,g)_3.$  

We give an example of $e_{m,g}$ below.
Before we do this, we state the following useful lemma from \cite{DEH}.
\begin{lemma}[{\cite[Lemma 3.7]{DEH}}]\label{lem: bipadic}
Let $p$ be a prime number, and let $i \in \N$ be such that $i =_p [i_0,i_1,\ldots].$ 
Then $b(i)= \prod_{t \ge 0} b(i_t\cdot p^t).$
\end{lemma}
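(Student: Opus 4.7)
The plan is to prove Lemma \ref{lem: bipadic} by induction on the number of nonzero digits in the $p$-adic expansion of $i$, using only Lemma \ref{lem: mult} and Lucas' theorem. The base cases (zero or one nonzero digit) are immediate: if all $i_t = 0$ then $i = 0$ and both sides equal $\mathbf{1}$, while if exactly one $i_t$ is nonzero the right-hand side is the single factor $b(i_t p^t) = b(i)$.

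For the inductive step, let $s$ be the largest index with $i_s \neq 0$, and split $i = j + k$ where $j = i_s p^s$ and $k = \sum_{t<s} i_t p^t$, so that $k < p^s \le j$. By the inductive hypothesis $b(k) = \prod_{t < s} b(i_t p^t)$, and it suffices to show $b(j)b(k) = b(i)$. Applying Lemma \ref{lem: mult} with $\max\{j,k\} = j$, I would expand
\[b(j)b(k) = \sum_{h=j}^{j+k}\binom{h}{j}\binom{h}{k}\binom{m+j+k}{j+k-h}b(h),\]
and argue that the only surviving summand is the one at $h = j+k = i$. When $h = i$ the third binomial is $\binom{m+i}{0} = 1$, while Lucas gives $\binom{i}{j} \equiv_p \binom{i}{k} \equiv_p 1$ (since $j$ and $k$ have disjoint digit supports in base $p$), so the coefficient is $1 \pmod p$ as required.

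The crux of the proof, and the step most likely to trip one up, is the vanishing of all intermediate terms. I would apply Lucas twice. First, $\binom{h}{j}$ with $j = i_s p^s$ reduces modulo $p$ to $\binom{h_s}{i_s}$, where $h_s$ is the $s$-th digit of $h$; since $h \le j + k < (i_s+1)p^s$, we have $h_s \le i_s$, and a nonzero contribution forces $h_s = i_s$. Writing $h = i_s p^s + h''$ with $0 \le h'' \le k$, the case $h < j + k$ gives $h'' < k$. Second, Lucas applied to $\binom{h}{k}$ yields $\prod_{t<s}\binom{h''_t}{i_t}$, which is nonzero modulo $p$ only if $h''_t \ge i_t$ for every $t < s$; since all digits lie in $\{0,\ldots,p-1\}$, this implies $h'' \ge k$, contradicting $h'' < k$. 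Hence every intermediate coefficient vanishes modulo $p$. The main obstacle is purely bookkeeping — correctly identifying which digits of $h$ are pinned down by each Lucas factorisation and checking the digit-wise inequality really does lift to $h'' \ge k$ — after which the induction closes cleanly.
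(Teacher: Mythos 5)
Your proof is correct. The paper does not prove this lemma itself --- it is imported by citation from \cite[Lemma 3.7]{DEH} --- so there is no in-paper argument to compare against; your derivation (induction on the number of nonzero $p$-adic digits, peeling off the top digit $j = i_s p^s$ and killing the intermediate terms of the product formula in Lemma \ref{lem: mult} with two applications of Lucas' theorem) is the natural self-contained route, and the key step, that digit-wise domination $h''_t \ge i_t$ forces $h'' \ge k$ and hence contradicts $h'' < k$, is handled correctly.
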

\begin{example}\label{ex: main}
Let $\lambda = (36,13),$ and let $\mu = (49,0).$ 
Then $m = 23,$ $g = 13,$ and
\[B(23,13)_3= \binom{1}{1}\binom{1}{1}\binom{2}{1}\binom{1}{0}\binom{0}{0}\binom{0}{0}\ldots.\]
Therefore $e_{23,13}$ equals
\[(b(1) - b(2))(b(3)-b(6))(b(18)-b(9))(\mathbf1-b(54))(\mathbf1+b(81)-b(162))\ldots.\]
As $b(a) = 0$ for $a > 13$ in $S_\F((36,13)),$ only finitely many factors in this infinite product are not equal to $\mathbf{1}.$ 
Then by Lemma \ref{lem: bipadic}
\begin{align*}
e_{23,13} &= (b(1) - b(2))(b(3)-b(6))(-b(9))\\
&= -b(13) + b(14) + b(16) - b(17)\\ 
&= -b(13) 
\end{align*}
in $S_\F((36,13)).$
\end{example}

\section{Multiplication in $S_\F(\lambda)$}\label{sec: squares}
Throughout this section fix $m \in \N_0,$ and fix a partition $\lambda = (\lambda_1,\lambda_2)$ such that $m=\lambda_1-\lambda_2.$ 
Observe that factor $u$ of $e_{m,g}$ can be expressed in terms of the elements
\begin{equation}\label{eq: factorsquares}
b(2\cdot 3^u) - b(3^u) \mbox{ and } b(2\cdot 3^u),
\end{equation} 
where $u \in \N_0.$ 
In the proof of Theorem \ref{thm: main}, we show that $(e_{m,g})_{\le u}^2 = (e_{m,g})_{\le u}.$
To this end we need to determine the squares of the elements in \eqref{eq: factorsquares}.
In this section we therefore assume that $\lambda_2 \ge 2\cdot 3^u,$ and we consider the products $b(3^u)^2, b(2\cdot 3^u)^2,$ and $b(3^u)b(2\cdot 3^u).$
\begin{definition}
Given $u \in \N_0,$ define
\[\psi_{m,u} = \sum_{k = 1}^{3^u - 1}\binom{m_{< u}}{3^u - k}b(k).\]
\end{definition}
We remark that our motivation for defining $\psi_{m,u}$ is twofold.
The immediate reason is that we can express the products $b(3^u)^2, b(2\cdot 3^u)^2,$ and $b(3^u)b(2\cdot 3^u)$ in terms of $\psi_{m,u}.$
Also, as stated in the outline, the product of $\psi_{m,u}$ with $(e_{m,g})_{< u}$ is fundamental in the proof of Theorem \ref{thm: main}. 

Consider first $b(3^u)^2.$ 
Lemma \ref{lem: mult} gives
\[b(3^u)^2 = \sum_{h = 3^u}^{2\cdot3^u} \binom{h}{3^u}^{\hspace{-3pt} 2}\binom{m + 2\cdot 3^u}{2\cdot 3^u - h}b(h).\]
A direct computation using this formula shows that the coefficient of $b(3^u)$ in $b(3^u)^2$ equals $\binom{m_u+2}{1},$ and that the coefficient of $b(2\cdot 3^u)$ equals 1. 
Also observe that in this sum if $3^u < h < 2\cdot 3^u,$ then we can write $h = 3^u + k,$ where $0 < k < 3^u.$
Then by Lucas' Theorem, for all such $h$
\[\binom{m+ 2\cdot 3^u }{2\cdot 3^u - h} \equiv_3 \binom{m_{< u}}{3^u-k} \binom{m_u+2}{0}  \equiv_3 \binom{m_{< u}}{3^u-k},\]
and so using Lemma \ref{lem: bipadic} we can write
\begin{equation}\label{eq: b3sq}
b(3^u)^2 = b(3^u)\left[\binom{m_u+2}{1} + \psi_{m,u}\right]+b(2\cdot 3^u).
\end{equation}

Consider now
\[b(2\cdot 3^u)^2 = \sum_{h = 2\cdot 3^u}^{4\cdot 3^u} \binom{h}{2\cdot 3^u}^{\hspace{-3pt} 2}\binom{m + 3^u + 3^{u+1}}{4\cdot 3^u - h}b(h).\]
Observe that if $h \ge 3^{u+1}$ in this sum, then the ternary addition of $2\cdot 3^u$ and $h - 2\cdot 3^u$ is not carry free.
It follows that  $\binom{h}{2\cdot 3^u} \equiv_3 0.$ 
Arguing similarly as above, the coefficient of $b(2\cdot 3^u)$ in $b(2\cdot 3^u)^2$ equals $\binom{m_u+1}{2}$.
Moreover, if $2\cdot 3^u < k < 3^{u+1},$ then we can write $h = 2\cdot 3^u + k,$ where $0 < k < 3^u.$
Again by Lucas' Theorem, for all such $h$ 
\[\binom{m + 3^u + 3^{u+1}}{4\cdot 3^u - h} = \binom{m + 3^u + 3^{u+1}}{3^u + 3^u - k} \equiv_3 \binom{m_{< u}}{3^u-k}\binom{m_u+ 1}{1}.\]
Using Lemma \ref{lem: bipadic} once more we obtain
\begin{equation}\label{eq: b2sq}
b(2\cdot 3^u)^2 = b(2\cdot 3^u)\left[\binom{m_u+1}{2} + \binom{m_u+1}{1}\psi_{m,u}\right].
\end{equation}
An entirely similar argument gives
\begin{equation}\label{eq: b3b2}
b(3^u)b(2\cdot 3^u) = b(2\cdot 3^u)\left[2\binom{m_u}{1} - \psi_{m,u}\right].
\end{equation}
If $j$ is maximal such that $b(j)$ appears with non-zero coefficient in one of $b(3^u)^2, b(3^u)b(2\cdot 3^u),$ or $b(2\cdot 3^u)^2,$ then \eqref{eq: b3sq}, \eqref{eq: b2sq} and \eqref{eq: b3b2} show that $j < 3^{u+1}.$
We therefore have the following lemma, which will be used in the inductive step of the proof of Proposition \ref{prop: emgu}. 
\begin{lemma}\label{lem: highestdegree}
Let $u \in \N$ be such that $3^u \le \lambda_2.$
Then the $\F$-span of the set 
\[\{b(k) : k < 3^u\}\]
is a subalgebra of $S_\F(\lambda).$ 
\end{lemma}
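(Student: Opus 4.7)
The plan is to reduce the claim, via Lemma \ref{lem: mult}, to showing that the coefficient of $b(h)$ in $b(i)b(j)$ vanishes modulo $3$ for all $i,j < 3^u$ and all $h \ge 3^u$ that appear in the product. Since $\mathbf{1} = b(0)$ already lies in the proposed span and the span is closed under addition and scalar multiplication, closure under multiplication is the only thing to check; by bilinearity it suffices to treat products of two basis elements $b(i), b(j)$ with $i,j < 3^u$.

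Unfolding Lemma \ref{lem: mult}, the summation range $\max\{i,j\} \le h \le i+j$ forces $h < 2\cdot 3^u$, so any potentially problematic $h$ takes the form $h = 3^u + h'$ with $0 \le h' < 3^u$. Two easy observations then finish the argument. First, applying Lucas' theorem digit by digit in base $3$, and using that the $u$-th digit of $i$ is zero, gives $\binom{h}{i} \equiv_3 \binom{1}{0}\binom{h'}{i} = \binom{h'}{i}$. Second, the hypothesis $\max\{i,j\} < 3^u$ rearranges to $i+j-3^u < \min\{i,j\}$, which together with $h \le i+j$ yields
\[
h' = h - 3^u \;\le\; i+j-3^u \;<\; \min\{i,j\} \le i,
\]
so $\binom{h'}{i}$ already vanishes as an \emph{integer}. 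Hence $\binom{h}{i} \equiv_3 0$, and the structure constant for $b(h)$ in $b(i)b(j)$ is zero modulo $3$; no analysis of the factor $\binom{m+i+j}{i+j-h}$ is needed.

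The whole argument collapses to a one-line application of Lucas once the inequality $\max\{i,j\} < 3^u$ has been recorded, and I do not foresee any genuine obstacle. The hypothesis $3^u \le \lambda_2$ plays only the role of ensuring that the $\F$-span of $\{b(k) : k < 3^u\}$ is a bona fide subspace of $S_\F(\lambda)$ on which the multiplication formula of Lemma \ref{lem: mult} applies exactly as stated, rather than being curtailed by the convention that $b(a) = 0$ whenever $a > \lambda_2$.
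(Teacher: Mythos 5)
Your argument is correct, but it follows a genuinely different route from the paper's. The paper gives no freestanding proof of this lemma: it reads the statement off from the explicit computations \eqref{eq: b3sq}, \eqref{eq: b2sq} and \eqref{eq: b3b2} of $b(3^t)^2$, $b(2\cdot 3^t)^2$ and $b(3^t)b(2\cdot 3^t)$, observing that every basis element occurring there is some $b(j)$ with $j < 3^{t+1}$; the reduction of a general product $b(i)b(j)$ with $i,j<3^u$ to these special products is left implicit, resting on Lemma \ref{lem: bipadic} and commutativity together with an induction over the ternary digits. You instead verify closure directly on the structure constants of Lemma \ref{lem: mult}: for $h$ in the range $3^u \le h \le i+j < 2\cdot 3^u$ you write $h = 3^u + h'$ and note that $\binom{h}{i} \equiv_3 \binom{1}{0}\binom{h'}{i}$, which vanishes because $h' \le i+j-3^u < \min\{i,j\} \le i$. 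All the steps check out: the block form of Lucas' theorem at position $u$ is legitimate since $i<3^u$ has $u$-th digit zero, and $\binom{h'}{i}$ is already zero as an integer when $h'<i$, so no inspection of $\binom{m+i+j}{i+j-h}$ is needed. What your approach buys is a self-contained, one-shot proof for arbitrary $i,j<3^u$, with no induction and no reliance on the displayed formulas; what the paper's approach buys is that the lemma comes essentially for free from computations it needs anyway for the proof of Proposition \ref{prop: emgu}. Both arguments ultimately rest on Lucas' theorem applied to the multiplication formula.
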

We end this section with the following lemma, which determines when $e_{m,g}$ is non-zero in $S_\F(\lambda).$ 
We remark that the first statement of the lemma can be observed in Example \ref{ex: main}. 
\begin{lemma}\label{lem: smallestinemg}
Let $g \in \N_0$ be such that $B(m,g)$ is non-zero modulo $3.$ 
Then 
\[e_{m,g} = B(m,g)b(g) + \sum_{i > g} \alpha_ib(i),\]
for some $\alpha_i \in \F_3.$ 
In particular, $e_{m,g}$ is non-zero in $S_\F(\lambda)$ if and only if $g \le \lambda_2.$ 
\end{lemma}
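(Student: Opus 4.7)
The plan is to unfold the definition of $e_{m,g}$ via distributivity, observe that each resulting product of $b$-elements is a single basis vector (by Lemma \ref{lem: bipadic}), and then track which basis vector has the smallest index.

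First, I would note that the hypothesis $B(m,g) \not\equiv 0 \pmod 3$ combined with Lucas' Theorem forces every $u \in \N_0$ to lie in one of the six index sets $I^{(j)}_{m,g}, J^{(j)}_{m,g}$: any $u$ whose $(g_u,(m+2g)_u)$-pair fell outside these six would contribute a zero factor to $B(m,g)_3$. So the product defining $e_{m,g}$ has a factor indexed by each $u$. Expanding each factor distributively amounts to choosing, for each $u$, a value $c_u \in \{0,1,2\}$ specifying whether we take $\mathbf{1}=b(0\cdot 3^u)$, $b(3^u)$, or $b(2\cdot 3^u)$ from that factor. Any such choice yields a contribution
\[
\pm\,\prod_{u} b(c_u\cdot 3^u) \;=\; \pm\,b\Bigl(\sum_u c_u\cdot 3^u\Bigr),
\]
by Lemma \ref{lem: bipadic}, since the digits $c_u \in \{0,1,2\}$ already form the $3$-adic expansion of the index.

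Next I would observe by inspection of the six factor templates that the allowed values of $c_u$ always satisfy $c_u \ge g_u$: for $u$ with $g_u=0$ every $c_u\in\{0,1,2\}$ appears, for $u$ with $g_u=1$ (i.e.\ $u\in J^{(0)}\cup J^{(2)}$) only $c_u\in\{1,2\}$ appears, and for $u$ with $g_u=2$ (i.e.\ $u\in J^{(1)}$) only $c_u=2$ appears. Consequently $\sum_u c_u\cdot 3^u \ge \sum_u g_u\cdot 3^u = g$ for every contributing term, with equality holding only when $c_u = g_u$ for all $u$ (uniqueness follows because the differences $c_u - g_u \ge 0$ sum to zero only when each is zero). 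Hence after regrouping we obtain $e_{m,g} = \alpha\, b(g) + \sum_{i>g} \alpha_i b(i)$ for some coefficients in $\F_3$.

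It remains to evaluate $\alpha$, which is the product of the signs attached to the $c_u = g_u$ choice in each factor. Reading off the six templates, the sign is $+1$ whenever $u \in I^{(0)}\cup I^{(1)} \cup I^{(2)} \cup J^{(1)} \cup J^{(2)}$ and $-1$ whenever $u \in J^{(0)}$. These signs are precisely the residues mod $3$ of the corresponding binomial factors
\[
\tbinom{0}{0}=\tbinom{1}{0}=\tbinom{2}{0}=\tbinom{2}{2}=\tbinom{1}{1}=1, \qquad \tbinom{2}{1}\equiv -1 \pmod 3,
\]
so $\alpha = \prod_u \binom{(m+2g)_u}{g_u} = B(m,g)$ in $\F_3$, as required.

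For the final claim, if $g \le \lambda_2$ then $b(g)$ is a non-zero basis element of $S_\F(\lambda)$ and its coefficient $B(m,g)$ is non-zero in $\F_3$ by assumption, so $e_{m,g}\ne 0$. Conversely, if $g>\lambda_2$ then every $b(i)$ appearing in the expansion satisfies $i\ge g>\lambda_2$, hence is set to zero by convention, and $e_{m,g}=0$. The main obstacle is merely the bookkeeping in matching each factor template to the corresponding Lucas binomial digit; once the inequality $c_u \ge g_u$ is in hand, everything else is immediate from Lemma \ref{lem: bipadic}.
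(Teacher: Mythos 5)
Your proof is correct and follows essentially the same route as the paper's: expand the defining product distributively, use Lemma \ref{lem: bipadic} to identify each resulting term with a single canonical basis element, and observe that the minimal choice in factor $u$ is $b(g_u\cdot 3^u)$ with coefficient $\binom{(m+2g)_u}{g_u}$, so the minimal term overall is $B(m,g)b(g)$. Your version just makes explicit the bookkeeping (the inequality $c_u\ge g_u$ and the uniqueness of equality) that the paper leaves implicit; the minor overstatement that all of $c_u\in\{0,1,2\}$ occur when $g_u=0$ (false for $I^{(1)}_{m,g}$) is harmless since only $c_u\ge g_u$ is needed.
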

\begin{proof}
Write $e_{m,g}$ as a linear combination in the canonical basis of $S_\F(\lambda)$ given in Lemma \ref{lem: mult}.  
As the index sets defining $e_{m,g}$ are mutually disjoint, Lemma \ref{lem: bipadic} implies that the smallest term in $e_{m,g}$ is the product of the smallest term in each factor (see \S \ref{sec: notation}) of $e_{m,g}.$
By the construction of $e_{m,g}$ immediately before Lemma \ref{lem: bipadic}, the smallest term in factor $u$ of $e_{m,g}$ is $b(g_u3^u)$ with coefficient $\binom{(m+2g)_u}{g_u}.$ 
It follows that the smallest term in $e_{m,g}$ is $\prod_u b(g_u3^u) = b(g)$ with coefficient $\prod_u \binom{(m+2g)_u}{g_u} \equiv_3 B(m,g).$

The second statement of the lemma now follows, since the largest element in the canonical basis of $S_\F(\lambda)$ is $b(\lambda_2).$ 
\end{proof}

\section{Analysis of the binomial coefficient $B(m,g)$}\label{sec: carries}
Fix a prime number $p,$ and let $m,g \in \N_0$ such that $B(m,g)$ is non-zero modulo $p.$
In this section we use the $p$-adic expansion of $B(m,g)$ to understand $m.$ 
We see that we can do this using the $p$-ary addition of $m$ and $g.$
We begin by considering the Example \ref{example: main} below, which demonstrates the link between $B(m,g)$ and $m$ that occurs in the general case.  
We require the following notation.

Given a prime $p,$ consider the following representation of the $p$-ary addition of $m$ and $g$:
\[
\begin{array}{r | cccccccccc}
m & m_0 & m_1 & \ldots & m_u & \ldots\\
g & g_0 & g_1 & \ldots & g_u & \ldots\\\hline
m+g & (m+g)_0 & (m+g)_1 & \ldots & (m+g)_u & \ldots
\end{array}\ ,
\]
where $m =_p [m_0,m_1,\ldots],$ and the analogous statements hold for $g$ and $m+g.$
Define $x_{-1} = 0,$ and given $u \in \N_0,$ recursively define $x_u$ as follows:
\begin{equation}\label{eq: carry}
m_u + g_u + x_{u-1} = (m+g)_u + px_u,
\end{equation}
so that $x_u$ is the carry \emph{leaving} column $u$ in this addition. 
Therefore for all $u \in \N_0,$ $x_{u-1}$ is the carry \emph{entering} column $u$ in this addition. 

\begin{remark}
The carries $x_u$ serve two purposes in this paper. 
The first, as we will see in this section, is that we can determine $m_u$ using $x_{u-1}.$
The second is that the product $(e_{m,g})_{< u}\psi_{m,u}$
can be determined entirely by the carry $x_{u-1}$
(see Lemma \ref{lem: finalfactor} below).  
We admit that it remains mysterious to us as to why 
this product depends only on $x_{u-1}.$ 
\end{remark}
In the next lemma, we determine the possible values of the carry $x_u.$
\begin{lemma}\label{lem: carry}
Suppose that in the $p$-ary addition of $m$ and $g$ the carry $x_u$ is non-zero for some $u \in \N_0.$
Then $x_u = 1.$ 
\end{lemma}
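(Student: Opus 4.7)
The plan is to prove by induction on $u$ that every carry $x_u$ lies in $\{0,1\}$; the statement of the lemma is then immediate. Note that this claim actually holds for the $p$-ary addition of any two non-negative integers, and does not use the hypothesis $B(m,g)\not\equiv 0\pmod p$.

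For the base case, $x_{-1}=0$ by definition. For the inductive step, suppose $x_{u-1}\in\{0,1\}$. The digits $m_u$ and $g_u$ both lie in $\{0,1,\ldots,p-1\}$, so
\[
m_u+g_u+x_{u-1}\;\le\;2(p-1)+1\;=\;2p-1.
\]
On the other hand, by the recursion \eqref{eq: carry} we have $px_u\le m_u+g_u+x_{u-1}$, hence $px_u\le 2p-1$, which forces $x_u\le 1$. Since $x_u\in\N_0$, this gives $x_u\in\{0,1\}$, completing the induction.

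The main (indeed the only) obstacle is simply to recognise that this is a routine fact about base-$p$ addition, so the proof is essentially a one-line induction. I would expect the author to write it in at most a few lines.
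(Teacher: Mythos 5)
Your proof is correct. It does, however, take a slightly different route from the paper's: you induct on the column index, using the single-column recursion \eqref{eq: carry} together with the inductive bound $x_{u-1}\le 1$ to get $px_u\le m_u+g_u+x_{u-1}\le 2p-1$, whereas the paper avoids induction entirely by summing the recursion over columns $0,\ldots,u$, which yields $\sum_{j=0}^u m_jp^j+\sum_{j=0}^u g_jp^j=x_up^{u+1}+\sum_{j=0}^u(m+g)_jp^j$ and then bounds each partial sum on the left by $p^{u+1}$ to conclude $x_up^{u+1}<2p^{u+1}$. Both arguments are elementary and of comparable length; yours makes the ``carries are at most 1 in any base-$p$ addition of two numbers'' intuition explicit column by column, while the paper's gives the bound in one global step without needing the inductive hypothesis. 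Your observation that the hypothesis $B(m,g)\not\equiv 0\pmod p$ is not needed is also accurate --- the paper's proof does not use it either.
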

\begin{proof}
Fix $u \in \N_0.$ 
By definition of $x_u,$ we have that 
\[\sum_{j=0}^u m_j p^j + \sum_{j=0}^u g_j p^j = x_u p^{u+1} + \sum_{j=0}^u (m+g)_j p^j.\]
Each of the sums on the left hand side of this equation is strictly less than $p^{u+1},$ and so 
\[2p^{u+1} > x_u p^{u+1} + \sum_{j=0}^u (m+g)_j p^j.\]
The result now follows since the sum on the right hand side of the inequality is non-negative. 
%
\end{proof} 
We now use Lemma \ref{lem: carry} to determine the possibilities for $m_u$ given factor $u$ of $B(m,g)_p.$
\begin{lemma}\label{lem: coltom}
Let $m,g \in \N_0$ be such that $B(m,g)$ is non-zero modulo $p.$ 
Let $a,b \in \N_0$ be such that $0 \le b \le a <p,$ and let factor $u$ of $B(m,g)_p$ equal $\binom{a}{b}.$
Let $z \in \{0,1,\ldots,p-1\}$ be the unique integer such that $z \equiv_p a-2b.$
Then either $m_u \equiv_p z$ and $x_{u-1} = 0,$ or $m_u \equiv_p z-1$ and $x_{u-1} = 1.$  
Moreover, $x_u = 1$ if and only if $m_u + g_u + x_{u-1} \ge p.$ 
\end{lemma}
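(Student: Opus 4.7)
The plan is to express $m_u$ modulo $p$ by viewing the identity $m + 2g = (m+g) + g$ as a $p$-ary addition. Let $t_u \in \{0,1\}$ (well-defined by Lemma \ref{lem: carry}) denote the carry leaving column $u$ in the addition $(m+g) + g$. Chaining the defining equations for the carries $x_u$ and $t_u$ yields
\[(m+2g)_u = (m+g)_u + g_u + t_{u-1} - p\, t_u = m_u + 2g_u + x_{u-1} + t_{u-1} - p(x_u + t_u),\]
so that $m_u \equiv_p a - 2b - x_{u-1} - t_{u-1}$.

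The heart of the argument is to show that $t_{u-1} = 0$ for every $u$, that is, the addition $(m+g) + g$ produces no carries at all. I would prove this by induction on $u$ with base case $t_{-1} = 0$: if $t_{u-1} = 0$, then $t_u = 1$ would force $(m+2g)_u = (m+g)_u + g_u - p < g_u$, contradicting the Lucas-theorem inequality $g_u \le (m+2g)_u$ which follows from the hypothesis that $B(m,g)$ is non-zero modulo $p$. (Equivalently, this is Kummer's theorem applied to $\binom{(m+g)+g}{g}$.)

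With $t_{u-1} = 0$ established, the congruence above collapses to $m_u + x_{u-1} \equiv_p a - 2b \equiv_p z$. Since $x_{u-1} \in \{0,1\}$ by Lemma \ref{lem: carry} and $m_u \in \{0,\ldots,p-1\}$, this splits into exactly the two cases in the first assertion: either $x_{u-1} = 0$ and $m_u \equiv_p z$, or $x_{u-1} = 1$ and $m_u \equiv_p z-1$.

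The second assertion is immediate from \eqref{eq: carry}: since $(m+g)_u < p$ and $x_u \in \{0,1\}$, the relation $m_u + g_u + x_{u-1} = (m+g)_u + p x_u$ forces $x_u = 1$ precisely when $m_u + g_u + x_{u-1} \ge p$. The main obstacle is the no-carry step for the $t_u$; the remaining manipulations are routine bookkeeping.
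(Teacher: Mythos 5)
Your proposal is correct and follows essentially the same route as the paper: both arguments reduce to the observation that $B(m,g)\not\equiv_p 0$ forces the $p$-ary addition of $m+g$ and $g$ to be carry-free, and then apply the carry relation \eqref{eq: carry} for $m+g$ together with Lemma \ref{lem: carry}. The only difference is cosmetic: the paper cites the carry-free fact directly (Kummer's theorem) and computes $(m+g)_u = a-b$ at once, whereas you keep the carries $t_u$ of the second addition symbolic and prove by induction from Lucas' theorem that they all vanish.
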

\begin{proof}
It follows from the definition of $B(m,g)_p$ that $(m+2g)_u = a$ and $g_u = b.$
As $B(m,g)$ is non-zero modulo $p,$ it follows that the $p$-ary addition of $m+g$ and $g$ is carry free.
Therefore $(m+g)_u = a-b,$ and so 
\[m_u + b + x_{u-1} = a-b + px_u \equiv_p a-b.\]
By Lemma \ref{lem: carry}, we have that $0 \le x_{u-1} \le 1.$
If $x_{u-1} = 0,$ then $m_u \equiv_p a-2b = z.$
Similarly if $x_{u-1} = 1,$ then $m_u \equiv_p z-1,$ as required. 

The second statement is immediate by definition of the carry $x_u$ and Lemma \ref{lem: carry}.
\end{proof}
In particular Lemma \ref{lem: coltom} shows that $(m+2g)_u - 2g_u \equiv_p m_u + x_{u-1}$ for all $u \in \N_0$ whenever $B(m,g)$ is non-zero modulo $p.$  
We now give an example of this observation. 
\begin{example}\label{example: main}
Let $\mu \in \N_0$ and $\nu \in \N$ be such that $\nu > \mu.$
Let $h \in \N$ be such that $h < p^\mu$ and $\binom{2h}{h}$ is non-zero modulo $p.$

We consider the case when $m = p^\mu$ and $g = p^\nu - p^\mu + h.$
Then 
\begin{itemize}
\item $x_u = 0$ for $0 \le u \le \mu-1,$ 
\item and $x_u = 1$ for $\mu \le u \le \nu-1.$ 
\end{itemize}

Let $h_u$ be the digits in the $p$-adic expansion of $h.$ 
The conditions on $h$ imply that $h_u \le \frac{p-1}{2}$ for all $u,$ and $h_u = 0$ for $u \ge \mu.$ 
Then $m+2g = p^\nu + (p^\nu - p^\mu) + 2h,$ and so the $p$-adic expansion of $\binom{m+2g}{g}$ equals
\[\binom{2h_0}{h_0}\binom{2h_1}{h_1}\ldots\binom{2h_{\mu-1}}{h_{\mu-1}}\binom{p-1}{p-1}\ldots\binom{p-1}{p-1}\binom{1}{0},\]
where the rightmost factor appearing is factor $\nu.$ 

Observe that
\begin{itemize}
\item if $u < \mu,$ then $(m+2g)_u-2g_u = 0 = m_u,$
\item $(m+2g)_\mu-2g_\mu \equiv_p 1 = m_\mu,$
\item and if $\mu< u \le \nu,$ then $(m+2g)_u-2g_u \equiv_p 1 = m_u + 1.$
\end{itemize}
In all cases we can therefore write 
\[(m+2g)_u - 2g_u \equiv_p m_u + x_{u-1},\]
as expected from Lemma \ref{lem: coltom}. 
\end{example}

\section{Proof of Theorem \ref{thm: main}.}\label{sec: proof1}
Fix $m,g \in \N_0$ such that $B(m,g)$ is non-zero modulo 3, and let $\lambda = (\lambda_1,\lambda_2)$ be such that $m  = \lambda_1-\lambda_2.$ 
Throughout the rest of this paper, $\F$ is assumed to be a field of characteristic 3. 
We prove the following proposition by filling in the details in the outline in \textsection \ref{sec: outline}.
\begin{proposition}\label{prop: emgu}
Given $u \in \N_0,$ $(e_{m,g})_{\le u}$ is an idempotent in $S_\F((m+3^{u+1}-1,3^{u+1}-1)).$ 
\end{proposition}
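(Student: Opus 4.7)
The plan is to prove Proposition \ref{prop: emgu} by induction on $u$, following the outline in \S \ref{sec: outline}.

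For the base case $u = 0$, the element $(e_{m,g})_{\le 0}$ is just factor $0$ of $e_{m,g}$ and takes one of six explicit forms determined by $((m+2g)_0, g_0)$. Since $x_{-1}=0$, Lemma \ref{lem: coltom} gives $m_0 \equiv_3 (m+2g)_0 - 2g_0$, so within each factor type the residue of $m_0$ modulo $3$ is known. The defining sum for $\psi_{m,0}$ is empty, so \eqref{eq: b3sq}, \eqref{eq: b2sq} and \eqref{eq: b3b2} specialise to
\[b(1)^2 = (m_0+2)b(1) + b(2), \quad b(2)^2 = \binom{m_0+1}{2} b(2), \quad b(1)b(2) = 2m_0\, b(2),\]
and squaring each of the six forms and reducing modulo $3$ is an immediate verification in $S_\F((m+2, 2))$.

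For the inductive step, assume that $(e_{m,g})_{<u}$ is idempotent in $S_\F((m+3^u-1, 3^u-1))$. Lemma \ref{lem: highestdegree} places $(e_{m,g})_{<u}$ inside the subalgebra spanned by $\{b(k): k < 3^u\}$, whose multiplication is insensitive to the ambient $\lambda_2$; hence $(e_{m,g})_{<u}^2 = (e_{m,g})_{<u}$ persists in the larger algebra $S_\F((m+3^{u+1}-1, 3^{u+1}-1))$. Writing $F_u$ for factor $u$ of $e_{m,g}$, so that $(e_{m,g})_{\le u} = (e_{m,g})_{<u} F_u$, commutativity reduces the task to showing $(e_{m,g})_{<u}(F_u^2 - F_u) = 0$.

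Expanding $F_u^2$ via \eqref{eq: b3sq}, \eqref{eq: b2sq}, \eqref{eq: b3b2} and reducing modulo $3$, I expect $F_u^2 - F_u$ to factor in every case as an element of $\F b(3^u) \oplus \F b(2\cdot 3^u)$ times the quantity $\psi_{m,u} - x_{u-1}$. Indeed, a sample computation in the $I^{(0)}_{m,g}$ case (where $F_u = \mathbf{1} + b(3^u) - b(2\cdot 3^u)$) yields $F_u^2 - F_u = b(3^u)\psi_{m,u}$ when $x_{u-1}=0$ and $F_u^2 - F_u = (b(3^u) - b(2\cdot 3^u))(\psi_{m,u}-1)$ when $x_{u-1}=1$; analogous computations in the other five factor types show the same pattern, each split into the two subcases of Lemma \ref{lem: coltom}. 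Granting the announced Lemma \ref{lem: finalfactor}, which I expect to assert
\[(e_{m,g})_{<u}\,\psi_{m,u} \;=\; x_{u-1}\cdot (e_{m,g})_{<u},\]
the product $(e_{m,g})_{<u}(F_u^2 - F_u)$ vanishes uniformly via commutativity, closing the induction.

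The main obstacle is Lemma \ref{lem: finalfactor} itself: the paper's own remark highlights that it is mysterious why $(e_{m,g})_{<u}\psi_{m,u}$ depends only on $x_{u-1}$. I would approach it by unpacking the basis expansion $\psi_{m,u} = \sum_{k=1}^{3^u-1}\binom{m_{<u}}{3^u-k}b(k)$, then using Lemma \ref{lem: bipadic} to split $b(k)$ digit-by-digit and pair each factor against the matching factor of $(e_{m,g})_{<u}$; Lemma \ref{lem: carry}, which confines the carries to $\{0,1\}$, should be what makes the recursive accumulation of digits telescope down to a single dependence on $x_{u-1}$.
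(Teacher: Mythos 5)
Your overall architecture is exactly the paper's: induction on $u$, the base case via the $u=0$ specialisations of \eqref{eq: b3sq}, \eqref{eq: b2sq} and \eqref{eq: b3b2} (your three displayed identities are correct, since $\psi_{m,0}=0$), and an inductive step that reduces, via commutativity and Lemma \ref{lem: highestdegree}, to showing $(e_{m,g})_{<u}(F_u^2-F_u)=0$ where $F_u$ is factor $u$. Your claimed factorisation of $F_u^2-F_u$ as an element of $\F b(3^u)\oplus\F b(2\cdot 3^u)$ times $\psi_{m,u}-x_{u-1}\mathbf{1}$ checks out in the cases I verified and is equivalent to the paper's six-case computation in \S\ref{sec: inductivestep}; this is a slightly cleaner packaging of the same calculation.

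The genuine gap is that Lemma \ref{lem: finalfactor} carries essentially all the weight of the proof and you do not prove it; your sketch for it would not succeed as stated. The paper's proof is not a matter of ``splitting $b(k)$ digit-by-digit and pairing each factor against the matching factor of $(e_{m,g})_{<u}$.'' It rests on three ingredients you do not mention: (i) a recursion, derived from Lucas' Theorem and Lemma \ref{lem: bipadic}, expressing $\psi_{m,t}$ in terms of $\psi_{m,t-1}$, $b(3^{t-1})$ and $b(2\cdot 3^{t-1})$ with coefficients $\binom{m_{t-1}}{j}$ (equation \eqref{eq: psimtrecursion}); (ii) a separate induction on $t$ that consumes one ternary digit at a time, splitting into cases according to $m_{t-1}$ and the incoming carry $x_{t-2}$ via Lemma \ref{lem: coltom}; and (iii) --- crucially --- the fact that this induction invokes the inductive hypothesis of Proposition \ref{prop: emgu} itself: one needs that $(e_{m,g})_{<t}=(e_{m,g})_{<t-1}w$ is already known to be idempotent in order to conclude $(e_{m,g})_{<t}w=(e_{m,g})_{<t}$ and $(e_{m,g})_{<t}(\mathbf{1}-w)=0$ (Lemma \ref{lem: multonend}), which is what converts the leftover $b(3^{t-1})$ and $b(2\cdot 3^{t-1})$ terms into $0$ or $(e_{m,g})_{<t}$. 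The two inductions are therefore interleaved, and Lemma \ref{lem: carry} alone (confining carries to $\{0,1\}$) does not make anything ``telescope'': without the recursion \eqref{eq: psimtrecursion} and the idempotency of the partial products, there is no mechanism by which the sum $\sum_{k}\binom{m_{<u}}{3^u-k}b(k)$ collapses to $x_{u-1}\cdot\mathbf{1}$ after multiplication by $(e_{m,g})_{<u}$. Granting the lemma, your argument closes; but as written, the main technical content of the proposition is assumed rather than established.
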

We remark that Proposition \ref{prop: emgu}, together with Lemma \ref{lem: highestdegree}, implies that $(e_{m,g})_{\le u}$ is also idempotent in $S_\F((m+a,a))$ for all $a \ge 3^{u+1}.$ 

We prove Proposition \ref{prop: emgu} by induction on $u,$ in which the base case is $u = 0.$
Before we do this, we show how the proposition implies that $e_{m,g}$ is an idempotent in $S_\F(\lambda).$
By Lemma \ref{lem: mult}, $S_\F(\lambda)$ has a basis given by the set
\[\{b(i) : 0 \le i \le \lambda_2\}.\]
Let $u \in \N_0$ be such that $3^u \le \lambda_2 < 3^{u+1}.$
If $e_{m,g}$ is non-zero in $S_\F(\lambda),$ then Lemma \ref{lem: smallestinemg} gives that $g \le \lambda_2.$
Therefore $g < 3^{u+1},$ and so by construction, $(e_{m,g})_{\le u} = e_{m,g}$ when viewed as an element of $S_\F(\lambda).$ 
As the multiplication structure of $S_\F(\lambda)$ depends only on $m,$ Proposition \ref{prop: emgu} gives
\begin{align*}
(e_{m,g})^2 = ((e_{m,g})_{\le u})^2 = (e_{m,g})_{\le u} = e_{m,g} \in S_\F(\lambda),
\end{align*}
as required

We now proceed with the proof of Proposition \ref{prop: emgu}. 
\subsection{The base case}\label{sec: base}
By definition $x_{-1} = 0.$ 
In this case Lemma \ref{lem: coltom} states that factor $0$ of $B(m,g)_3$ equals $\binom{a}{b},$ where
$a - 2b \equiv_3 m_0.$
We distinguish three cases, determined by $m_0.$

\emph{Case} (1). Suppose that $m_0 = 0.$ 
Then the only possibilities for factor 0 of $B(m,g)_3$ are 
\[\binom{0}{0} \mbox{ or } \binom{2}{1}.\]
By definition $(e_{m,g})_{\le 0}$ equals either $\mathbf{1}-b(1)+b(2)$ or $b(2) - b(1).$ 
It is sufficient to prove that $b(2) - b(1)$ is idempotent when $m_0 = 0.$
Indeed \eqref{eq: b3sq}, \eqref{eq: b2sq} and \eqref{eq: b3b2} applied with $u = 0$ and $m_0 = 0$ give
\begin{align*}
(b(2)-b(1))^2 &= b(2)^2 + b(1)b(2) + b(1)^2\\
&= 0 + 0 +b(2) - b(1) = b(2) - b(1).
\end{align*}

\emph{Case} (2). Suppose that $m_0 = 1.$
Then the only possibilities for factor 0 of $B(m,g)_3$ are 
\[\binom{1}{0} \mbox{ or } \binom{2}{2},\]
and so $(e_{m,g})_{\le 0}$ equals either $\mathbf{1}-b(2)$ or $b(2).$
Applying \eqref{eq: b2sq} with $u = 0$ and $m_0 = 1$ shows that $b(2)$ is idempotent in this case.

\emph{Case} (3). Suppose that $m_0 = 2.$
Then the only possibilities for factor 0 of $B(m,g)_3$ are 
\[\binom{2}{0} \mbox{ or } \binom{1}{1},\]
and so $(e_{m,g})_{\le 0}$ equals either $\mathbf{1}-b(1)+b(2)$ or $b(1) - b(2).$ 
Again \eqref{eq: b3sq}, \eqref{eq: b2sq} and \eqref{eq: b3b2} applied with $u = 0$ and $m_0 = 2$ give
\begin{align*}
(b(1) - b(2))^2 &= b(1)^2 + b(1)b(2) + b(2)^2\\
 &= b(1) + b(2) +b(2) + 0 \equiv_3 b(1) -b(2),
\end{align*}
as required. 
\subsection{The inductive step}\label{sec: inductivestep}
Throughout this section fix $u \in \N.$ 
Lemma \ref{lem: highestdegree} implies that $((e_{m,g})_{\le u})^2$ is contained in the $\F$-span of $\{b(i) : i < 3^{u+1}\},$
and so it is sufficient to prove that $(e_{m,g})_{\le u}$ is an idempotent in $S_\F((m+\lambda_2,\lambda_2)),$ where $\lambda_2 < 3^{u+1}.$ 

Assume inductively that $(e_{m,g})_{\le t}$ is an idempotent in $S_\F(\lambda)$ for all $t < u.$
We require the following lemmas. 
\begin{lemma}\label{lem: multonend}
Let $t \in \N_0$ be such that $t < u.$
Suppose that $v:=(e_{m,g})_{\le t}w,$ is an idempotent in $S_F(\lambda).$
Then $vw=v$ and $v(\mathbf{1}-w)=0.$
\end{lemma}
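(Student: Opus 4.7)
The plan is a short commutativity-plus-induction calculation; I do not anticipate any real obstacle here. First, observe that the second assertion is immediate from the first by linearity, since $v(\mathbf{1}-w) = v - vw,$ so everything reduces to proving $vw = v.$

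To prove $vw = v,$ I would use two ingredients already available. The first is that $S_\F(\lambda)$ is a commutative algebra, as noted immediately after Lemma \ref{lem: mult}. The second is the inductive hypothesis for Proposition \ref{prop: emgu}: for every $t < u,$ the element $(e_{m,g})_{\le t}$ is idempotent in $S_\F(\lambda).$ (Although Proposition \ref{prop: emgu} is phrased inside a particular ambient algebra, the remark following its statement, combined with Lemma \ref{lem: highestdegree}, transfers idempotency to our $S_\F(\lambda).$) Squaring the defining expression for $v$ and freely commuting factors then yields
\begin{equation*}
v^2 \;=\; \bigl((e_{m,g})_{\le t}\bigr)^{2} w^2 \;=\; (e_{m,g})_{\le t}\, w^2 \;=\; \bigl((e_{m,g})_{\le t}\, w\bigr)\, w \;=\; vw.
\end{equation*}
On the other hand, $v$ is idempotent by hypothesis, so $v^2 = v;$ comparing the two expressions gives $vw = v,$ as required.

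In summary, the lemma is essentially a formal consequence of commutativity and the inductive hypothesis, and the only point worth flagging in the write-up is making sure the induction hypothesis is being invoked in the correct ambient algebra.
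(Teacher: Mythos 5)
Your proposal is correct and is essentially the paper's own proof: both arguments use the standing inductive hypothesis that $(e_{m,g})_{\le t}$ is idempotent together with commutativity to identify $vw$ with $((e_{m,g})_{\le t})^2w^2 = v^2 = v$. Your derivation of $v(\mathbf{1}-w)=0$ by linearity from the first identity is a slightly cleaner phrasing of what the paper dismisses as ``entirely similar,'' but the content is the same.
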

\begin{proof}
We have assumed that $(e_{m,g})_{\le t}$ is an idempotent in $S_F(\lambda),$ and so 
\[vw = (e_{m,g})_{\le t}w^2 =  ((e_{m,g})_{\le t})^2w^2 = v^2 = v,\]
as required. 
The proof that $v(\mathbf{1}-w)=0$ is entirely similar. 
\end{proof}
Recall from \textsection \ref{sec: carries} that $x_t$ denotes the carry leaving column $t$ in the ternary addition of $m$ and $g,$ and that 
\[\psi_{m,t} =  \sum_{k = 1}^{3^t - 1}\binom{m_{< t}}{3^t - k}b(k)\]
for $t \in \N_0.$
\begin{lemma}\label{lem: finalfactor}
Let $t \in \N_0$ be such that $t \le u.$
Then 
\[(e_{m,g})_{< t }\psi_{m,t} = 
\begin{cases}
0 & \mbox{if $x_{t-1} = 0,$}\\
(e_{m,g})_{< t}  &  \mbox{if $x_{t-1} = 1.$}
\end{cases}\]
\end{lemma}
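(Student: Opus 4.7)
I would proceed by induction on $t$.  The base case $t=0$ is immediate: $\psi_{m,0}$ is the empty sum, hence $0$, while $x_{-1}=0$ by convention, so both sides of the claimed identity vanish.

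For the inductive step, the key preparatory observation is a recursion for $\psi_{m,t}$ that exposes the factor at position $t-1$.  Writing any index $k\in\{1,\ldots,3^t-1\}$ as $k = k' + k_{t-1}\cdot 3^{t-1}$ with $0\le k'<3^{t-1}$ and $k_{t-1}\in\{0,1,2\}$, applying Lucas' Theorem to $\binom{m_{<t}}{3^t-k}$, and using Lemma~\ref{lem: bipadic} to split $b(k)=b(k')b(k_{t-1}\cdot 3^{t-1})$, a direct bookkeeping separating the three cases $k_{t-1}=0,1,2$ gives
\begin{align*}
\psi_{m,t} &= \binom{m_{t-1}}{2}\bigl(\psi_{m,t-1}+b(3^{t-1})\bigr) \\
&\quad + \binom{m_{t-1}}{1}\bigl(b(3^{t-1})\psi_{m,t-1}+b(2\cdot 3^{t-1})\bigr) + b(2\cdot 3^{t-1})\psi_{m,t-1}.
\end{align*}
Multiplying by $(e_{m,g})_{<t-1}$ and invoking the inductive hypothesis to substitute either $0$ (if $x_{t-2}=0$) or $(e_{m,g})_{<t-1}$ (if $x_{t-2}=1$) for $(e_{m,g})_{<t-1}\psi_{m,t-1}$ collapses the expression to a polynomial in $b(3^{t-1})$ and $b(2\cdot 3^{t-1})$ over the subalgebra containing $(e_{m,g})_{<t-1}$, with explicit coefficients in $\binom{m_{t-1}}{1}$ and $\binom{m_{t-1}}{2}$.

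The final step is to left-multiply by the factor $F_{t-1}$ and verify the claimed identity.  Here I would case-split on the twelve pairs (six shapes of $F_{t-1}$ crossed with $x_{t-2}\in\{0,1\}$): each shape of $F_{t-1}$ fixes $(m+2g)_{t-1}$ and $g_{t-1}$, Lemma~\ref{lem: coltom} then pins down $m_{t-1}$ (hence $\binom{m_{t-1}}{1}$ and $\binom{m_{t-1}}{2}$ modulo~$3$) and $x_{t-1}$, and the surviving products of $F_{t-1}$ with $b(3^{t-1})$ and $b(2\cdot 3^{t-1})$ are evaluated using \eqref{eq: b3sq}, \eqref{eq: b2sq} and \eqref{eq: b3b2} together with the idempotence of $(e_{m,g})_{<t}$ supplied by the outer induction in the proof of Proposition~\ref{prop: emgu}.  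In each case the resulting linear combination either vanishes modulo~$3$ (when $x_{t-1}=0$) or reduces to $(e_{m,g})_{<t}$ (when $x_{t-1}=1$), matching the statement.

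The main obstacle is the opacity of this twelve-case bookkeeping: the characteristic-$3$ cancellations that make the result true are delicate and threaded throughout several of the $\binom{m_{t-1}}{\cdot}$ coefficients and the quadratic relations \eqref{eq: b3sq}–\eqref{eq: b3b2}.  To keep the argument tractable I would organise the twelve cases into a single table recording, for each pair $(F_{t-1},x_{t-2})$, the values of $m_{t-1}$, $x_{t-1}$, $\binom{m_{t-1}}{1}$, $\binom{m_{t-1}}{2}$, and the simplified form of $F_{t-1}(e_{m,g})_{<t-1}\psi_{m,t}$, so that the final check becomes routine.
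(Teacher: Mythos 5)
Your proposal is correct and follows essentially the same route as the paper: induction on $t$, the identical recursion for $\psi_{m,t}$ isolating the digit $m_{t-1}$ (this is \eqref{eq: psimtrecursion}), Lemma \ref{lem: coltom} to pin down $(m_{t-1},x_{t-1})$ from the factor shape and the incoming carry $x_{t-2}$, and the idempotence of the partial products supplied by the outer induction in Proposition \ref{prop: emgu}. The only difference is cosmetic: after substituting the inductive hypothesis, the paper notes that the residual term is exactly $w$ or $\mathbf{1}-w$ (with $w$ factor $t-1$ of $e_{m,g}$) and finishes with Lemma \ref{lem: multonend}, rather than re-expanding products via \eqref{eq: b3sq}--\eqref{eq: b3b2}, which would reintroduce $\psi_{m,t-1}$ and require a second appeal to the inductive hypothesis.
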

\begin{proof}
We proceed by induction on $t.$ 
The base case is when $t = 0,$ where the product defining $(e_{m,g})_{< 0}$ is empty.
Therefore $(e_{m,g})_{< 0} = 1.$
By definition $x_{-1}=0$ and $\psi_{m,0} = 0,$ and so the result holds in this case. 

Suppose now that $t \ge 1$ and that the result holds for all $s < t.$  
By Lemma \ref{lem: bipadic} we can write
\begin{align*}
\psi_{m,t} &= \sum_{k=1}^{3^{t-1}-1} \binom{m_{< t}}{3^t - k}b(k)\\ 
&+ b(3^{t-1})\left[\binom{m_{t-1}}{2}+\sum_{k=1}^{3^{t-1}-1}\binom{m_{< t}}{3^t - (3^{t-1}+k)}b(k)\right]\\
& + b(2\cdot 3^{t-1})\left[\binom{m_{t-1}}{1}+ \sum_{k=1}^{3^{t-1}-1}\binom{m_{< t}}{3^t - (2\cdot 3^{t-1}+k)}b(k)\right].
\end{align*}
For $1 \le k \le 3^{t-1}-1,$ Lucas' Theorem implies that
\begin{align*}
\binom{m_{< t}}{3^t - k} &= \binom{m_{< t-1} + m_{t-1}\cdot 3^{t-1}}{3^{t-1}-k + 2\cdot 3^{t-1}}\\
&\equiv_3 \binom{m_{< t-1}}{3^{t-1}-k}\binom{m_{t-1}}{2}.
\end{align*}
Applying similar arguments for all $3^{t-1} \le k \le 3^t-1$ shows that
\begin{align}
\psi_{m,t} &= \psi_{m,t-1}\left[\binom{m_{t-1}}{2} + \binom{m_{t-1}}{1}b(3^{t-1}) + \binom{m_{t-1}}{0}b(2\cdot 3^{t-1})\right]\label{eq: psimtrecursion}\\
&+\binom{m_{t-1}}{2}b(3^{t-1}) + \binom{m_{t-1}}{1}b(2\cdot 3^{t-1}).\nonumber
\end{align}
We now distinguish three cases, determined by $m_{t-1}.$

{\it Case} (1). Suppose that $m_{t-1} = 0.$ 
Then \eqref{eq: psimtrecursion} becomes
\[\psi_{m,t} = \psi_{m,t-1}b(2\cdot 3^{t-1}).\]

If $x_{t-2} = 0,$ then the first statement of Lemma \ref{lem: coltom} implies that factor $t-1$ of $B(m,g)_3$ equals either $\binom{0}{0}$ or $\binom{2}{1}.$ 
As $x_{t-2} = m_{t-1} = 0,$ the second statement of Lemma \ref{lem: coltom} gives that $x_{t-1} = 0.$
Moreover, the inductive hypothesis of this lemma gives
\[(e_{m,g})_{< t}\psi_{m,t} = (e_{m,g})_{< t-1}\psi_{m,t-1}b(2\cdot 3^{t-1})w = 0,\]
where $w$ equals either $\mathbf{1}+b(3^{t-1})-b(2\cdot 3^{t-1})$ if factor $t-1$ equals $\binom{0}{0},$ or $b(2\cdot 3^{t-1})-b(3^{t-1})$ if factor $t-1$ equals $\binom{2}{1}.$
The result therefore holds in this case. 

If $x_{t-2} = 1,$ then the first statement of Lemma \ref{lem: coltom} implies that factor $t-1$ of $B(m,g)_3$ equals either $\binom{1}{0}$ or $\binom{2}{2}.$ 
By construction
\[(e_{m,g})_{< t} = (e_{m,g})_{< t-1}w,\]
where $w$ equals either $\mathbf{1}-b(2\cdot 3^{t-1})$ if factor $t-1$ equals $\binom{1}{0},$ or $b(2\cdot 3^{t-1})$ if factor $t-1$ equals $\binom{2}{2}.$ 
Then
\begin{align*}
(e_{m,g})_{< t}\psi_{m,t} &= (e_{m,g})_{< t-1}w\psi_{m,t-1}b(2\cdot 3^{t-1})\\ 
&= (e_{m,g})_{< t-1}wb(2\cdot 3^{t-1}),
\end{align*}
where the second equality holds by the inductive hypothesis of this lemma. 
If factor $t-1$ of $B(m,g)_3$ equals $\binom{1}{0},$ then the second statement of Lemma \ref{lem: coltom} applied with $m_{t-1} = 0, g_{t-1} = 0,$ and $x_{t-2} = 1$ gives $x_{t-1} = 0.$
Moreover, $w = \mathbf{1}-b(2\cdot 3^{t-1})$ in this case, and so $(e_{m,g})_{< t}\psi_{m,t} = (e_{m,g})_{< t}(\mathbf{1}-w).$
As $v := (e_{m,g})_{< t} = (e_{m,g})_{< t-1}w$ is an idempotent by the inductive hypothesis of Proposition \ref{prop: emgu}, it follows from Lemma \ref{lem: multonend} that
\[(e_{m,g})_{< t}\psi_{m,t} = v(\mathbf{1}-w) = 0.\]
If factor $t-1$ of $B(m,g)_3$ equals $\binom{2}{2},$ then the second statement of Lemma \ref{lem: coltom} now applied with $m_{t-1} = 0, g_{t-1} = 2,$ and $x_{t-2} = 1$ gives $x_{t-1} = 1.$
Moreover, $w = b(2\cdot 3^{t-1})$ in this case, and so $(e_{m,g})_{< t}\psi_{m,t} = (e_{m,g})_{< t}w.$
As $v := (e_{m,g})_{< t} = (e_{m,g})_{< t-1}w$ is an idempotent by the inductive hypothesis of Proposition \ref{prop: emgu}, it follows from Lemma \ref{lem: multonend} that
\[(e_{m,g})_{< t}\psi_{m,t} = vw = v = (e_{m,g})_{< t}.\]

{\it Case} (2). Suppose that $m_{t-1} = 1.$
Then \eqref{eq: psimtrecursion} becomes
\[\psi_{m,t} = \psi_{m,t-1}(b(3^{t-1}) + b(2\cdot 3^{t-1}))+b(2\cdot 3^{t-1}).\]

If $x_{t-2} = 0,$ then the first statement of Lemma \ref{lem: coltom} implies that factor $t-1$ of $B(m,g)_3$ equals either $\binom{1}{0}$ or $\binom{2}{2}.$ 
Again by the construction of $e_{m,g}$ 
\[(e_{m,g})_{< t} = (e_{m,g})_{< t-1}w,\]
where $w$ equals either $\mathbf{1}-b(2\cdot 3^{t-1})$ if factor $t-1$ equals $\binom{1}{0},$ or $b(2\cdot 3^{t-1})$ if factor $t-1$ equals $\binom{2}{2}.$
Moreover, the inductive hypothesis of this lemma implies that 
\[(e_{m,g})_{< t}\psi_{m,t} = (e_{m,g})_{< t-1}b(2\cdot 3^{t-1})w,\]
for both possibilities of $w.$ 
The argument is now the same as when $x_{t-2} = 1$ in Case (1). 

If $x_{t-2} = 1,$ then the first statement of Lemma \ref{lem: coltom} implies that factor $t-1$ of $B(m,g)_3$ equals either $\binom{2}{0}$ or $\binom{1}{1}.$ 
By construction
\[(e_{m,g})_{< t} = (e_{m,g})_{< t-1}w,\]
where $w$ equals either $\mathbf{1}-b(3^{t-1})+b(2\cdot 3^{t-1})$ if factor $t-1$ equals $\binom{2}{0},$ or $b(3^{t-1})-b(2\cdot 3^{t-1})$ if factor $t-1$ equals $\binom{1}{1}.$ 
Then
\begin{align*}
(e_{m,g})_{< t}\psi_{m,t} &= (e_{m,g})_{< t-1}w(\psi_{m,t-1}(b(3^{t-1}) + b(2\cdot 3^{t-1}))+b(2\cdot 3^{t-1}))\\ 
&= (e_{m,g})_{< t-1}w(b(3^{t-1}) - b(2\cdot 3^{t-1})),
\end{align*}
where the second equality holds by the inductive hypothesis of this lemma. 
If factor $t-1$ of $B(m,g)_3$ equals $\binom{2}{0},$ then the second statement of Lemma \ref{lem: coltom} applied with $m_{t-1} = 1, g_{t-1} = 0,$ and $x_{t-2} = 1$ gives $x_{t-1} = 0.$
Moreover, $w = \mathbf{1}-b(3^{t-1}) + b(2\cdot 3^{t-1})$ in this case, and so $(e_{m,g})_{<t}\psi_{m,t} = (e_{m,g})_{<t}(\mathbf{1}-w).$
As $v := (e_{m,g})_{< t} = (e_{m,g})_{< t-1}w$ is an idempotent by the inductive hypothesis of Proposition \ref{prop: emgu}, it follows from Lemma \ref{lem: multonend} that
\[(e_{m,g})_{< t}\psi_{m,t} = v(\mathbf{1}-w) = 0.\]
If factor $t-1$ of $B(m,g)_3$ equals $\binom{1}{1},$ then the second statement of Lemma \ref{lem: coltom} now applied with $m_{t-1} = 1, g_{t-1} = 1,$ and $x_{t-2} = 1$ gives $x_{t-1} = 1.$
Moreover, $w = b(3^{t-1})-b(2\cdot 3^{t-1})$ in this case.
As $v := (e_{m,g})_{< t} = (e_{m,g})_{< t-1}w$ is an idempotent by the inductive hypothesis of Proposition \ref{prop: emgu}, it follows from Lemma \ref{lem: multonend} that
\[(e_{m,g})_{< t}\psi_{m,t} = vw = v = (e_{m,g})_{< t}.\]

{\it Case} (3). Suppose that $m_{t-1} = 2.$
Then \eqref{eq: psimtrecursion} becomes
\[\psi_{m,t} = \psi_{m,t-1}(\mathbf{1}-b(3^{t-1}) + b(2\cdot 3^{t-1}))+b(3^{t-1})-b(2\cdot 3^{t-1}).\]

If $x_{t-2} = 0,$ then the first statement of Lemma \ref{lem: coltom} implies that factor $t-1$ of $B(m,g)_3$ equals either $\binom{2}{0}$ or $\binom{1}{1}.$ 
Again by the construction of $e_{m,g}$ 
\[(e_{m,g})_{< t} = (e_{m,g})_{< t-1}w,\]
where $w$ equals either $\mathbf{1}-b(3^{t-1})+b(2\cdot 3^{t-1})$ if factor $t-1$ equals $\binom{2}{0},$ or $b(3^{t-1})-b(2\cdot 3^{t-1})$ if factor $t-1$ equals $\binom{1}{1}.$
The argument is now the same as when $x_{t-2} = 1$ in Case (2). 

If $x_{t-2} = 1,$ then the first statement of Lemma \ref{lem: coltom} implies that factor $t-1$ of $B(m,g)_3$ equals either $\binom{0}{0}$ or $\binom{2}{1}.$ 
By construction
\[(e_{m,g})_{< t} = (e_{m,g})_{< t-1}w,\]
where $w$ equals either $\mathbf{1}+b(3^{t-1})-b(2\cdot 3^{t-1})$ if factor $t-1$ equals $\binom{0}{0},$ or $b(2\cdot 3^{t-1})-b(3^{t-1})$ if factor $t-1$ equals $\binom{2}{1}.$ 
Then $(e_{m,g})_{< t}\psi_{m,t}$ equals
\[(e_{m,g})_{< t-1}w(\psi_{m,t-1}(\mathbf{1}-b(3^{t-1}) + b(2\cdot 3^{t-1}))+b(3^{t-1})-b(2\cdot 3^{t-1})),\]
which by the inductive hypothesis of this lemma equals $(e_{m,g})_{< t-1}w$ for both possibilities of $w.$ 
Therefore $(e_{m,g})_{< t}\psi_{m,t} = (e_{m,g})_{< t}.$
As $m_{t-1}+x_{t-2}+g_{t-1} = 3 + g_{t-1} \ge 3,$ it follows from the second statement of Lemma \ref{lem: coltom} that $x_{t-1} = 1$ for both possible factors.
The result therefore holds in this case. 
\end{proof}
We now complete the inductive step of the proof of Proposition \ref{prop: emgu}. 
\begin{proof}[Proof of the inductive step]
Assume that $3^u \le \lambda_2 \le 2\cdot 3^u.$ 
If $\lambda_2 < 2\cdot 3^u,$ then in the following calculations we regard all terms equal to $b(2\cdot 3^u)$ as zero. 
We consider each possibility for factor $u$ of $B(m,g)_3$ in turn.  

{\it Case} (1a). 
Suppose that factor $u$ of $B(m,g)_3$ equals $\binom{2}{1}.$ 
By Lemma \ref{lem: coltom} either $m_u = 0$ and $x_{u-1} = 0,$ or $m_u = 2$ and $x_{u-1} = 1.$ 
By construction of $e_{m,g}$ and the inductive hypothesis
\begin{align*}
(e_{m,g})_{\le u}^2 &= ((e_{m,g})_{<u})^2(b(2\cdot 3^u)-b(3^u))^2\\
&= (e_{m,g})_{<u}(b(2\cdot 3^u)^2+b(2\cdot 3^u)b(3^u)+b(3^u)^2)\\
&= (e_{m,g})_{<u}b(2\cdot 3^u)\left[\binom{m_u+1}{2} + \binom{m_u+1}{1}\psi_{m,u}\right]\\
&+ (e_{m,g})_{<u}b(2\cdot 3^u)\left[2\binom{m_u}{1} - \psi_{m,u}\right]\\
&+ (e_{m,g})_{<u}\left(b(3^u)\left[\binom{m_u+2}{1} + \psi_{m,u}\right]+b(2\cdot 3^u)\right),
\end{align*}
where the final equality holds by \eqref{eq: b3sq}, \eqref{eq: b2sq} and \eqref{eq: b3b2}.
The result in this case now follows from Lemma \ref{lem: finalfactor}.

{\it Case} (1b). 
Suppose that factor $u$ of $B(m,g)_3$ equals $\binom{0}{0}.$ 
By Lemma \ref{lem: coltom} either $m_u = 0$ and $x_{u-1} = 0,$ or $m_u = 2$ and $x_{u-1} = 1.$ 
By construction of $e_{m,g}$ and the inductive hypothesis
\begin{align*}
(e_{m,g})_{\le u}^2 &= ((e_{m,g})_{<u})^2(\mathbf{1}+b(3^u)-b(2\cdot 3^u))^2\\
&= (e_{m,g})_{<u}(\mathbf{1}\!+\!b(3^u)^2\!+\!b(2\cdot 3^u)^2\! - \!b(3^u)\!+\!b(2\cdot 3^u)\!+\!b(2\cdot 3^u)b(3^u))\\
&= (e_{m,g})_{<u}(\mathbf{1}- b(3^u)+b(2\cdot 3^u))\\
&+(e_{m,g})_{<u}b(2\cdot 3^u)\left[\binom{m_u+1}{2} + \binom{m_u+1}{1}\psi_{m,u}\right]\\
&+ (e_{m,g})_{<u}b(2\cdot 3^u)\left[2\binom{m_u}{1} - \psi_{m,u}\right]\\
&+ (e_{m,g})_{<u}\left(b(3^u)\left[\binom{m_u+2}{1} + \psi_{m,u}\right]+b(2\cdot 3^u)\right),
\end{align*}
where the final equality holds by \eqref{eq: b3sq}, \eqref{eq: b2sq} and \eqref{eq: b3b2}.
Again the result in this case now follows from Lemma \ref{lem: finalfactor}. 

{\it Case} (2a). 
Suppose that factor $u$ of $B(m,g)_3$ equals $\binom{2}{2}.$ 
By Lemma \ref{lem: coltom} either $m_u = 1$ and $x_{u-1} = 0,$ or $m_u = 0$ and $x_{u-1} = 1.$ 
By construction of $e_{m,g}$ and the inductive hypothesis
\begin{align*}
(e_{m,g})_{\le u}^2 &= (e_{m,g})_{<u}^2b(2\cdot 3^u)^2\\
&= (e_{m,g})_{<u}b(2\cdot 3^u)\left[\binom{m_u+1}{2} + \binom{m_u+1}{1}\psi_{m,u}\right],
\end{align*}
where the final equality holds by \eqref{eq: b2sq}.
The result in this case now follows from Lemma \ref{lem: finalfactor}.

{\it Case} (2b). 
Suppose that factor $u$ of $B(m,g)_3$ equals $\binom{1}{0}.$ 
By Lemma \ref{lem: coltom} either $m_u = 1$ and $x_{u-1} = 0,$ or $m_u = 0$ and $x_{u-1} = 1.$ 
By construction of $e_{m,g}$ and the inductive hypothesis
\begin{align*}
(e_{m,g})_{\le u}^2 &= (e_{m,g})_{<u}^2(\mathbf{1}-b(2\cdot 3^u))^2\\
&= (e_{m,g})_{<u}^2(\mathbf{1}+ b(2\cdot 3^u) + b(2\cdot 3^u)^2)\\
&= (e_{m,g})_{<u}\left(\mathbf{1}+b(2\cdot 3^u)\left[1+\binom{m_u+1}{2} + \binom{m_u+1}{1}\psi_{m,u}\right]\right),
\end{align*}
where the final equality holds by \eqref{eq: b2sq}.
Again the result in this case now follows from Lemma \ref{lem: finalfactor}.

{\it Case} (3a). 
Suppose that factor $u$ of $B(m,g)_3$ equals $\binom{1}{1}.$ 
By Lemma \ref{lem: coltom} either $m_u = 2$ and $x_{u-1} = 0,$ or $m_u = 1$ and $x_{u-1} = 1.$ 
By construction of $e_{m,g}$ and the inductive hypothesis
\begin{align*}
(e_{m,g})_{\le u}^2 &= (e_{m,g})_{<u}^2(b(3^u)-b(2\cdot 3^u))^2\\
&= (e_{m,g})_{<u}b(2\cdot 3^u)\left[\binom{m_u+1}{2} + \binom{m_u+1}{1}\psi_{m,u}\right]\\
&+ (e_{m,g})_{<u}b(2\cdot 3^u)\left[2\binom{m_u}{1} - \psi_{m,u}\right]\\
&+ (e_{m,g})_{<u}\left(b(3^u)\left[\binom{m_u+2}{1} + \psi_{m,u}\right]+b(2\cdot 3^u)\right),
\end{align*}
where the final equality holds by \eqref{eq: b3sq}, \eqref{eq: b2sq} and \eqref{eq: b3b2}.
The result in this case now follows from Lemma \ref{lem: finalfactor}.

{\it Case} (3b). 
Suppose that factor $u$ of $B(m,g)_3$ equals $\binom{2}{0}.$ 
By Lemma \ref{lem: coltom} either $m_u = 2$ and $x_{u-1} = 0,$ or $m_u = 1$ and $x_{u-1} = 1.$ 
By construction of $e_{m,g}$ and the inductive hypothesis
\begin{align*}
(e_{m,g})_{\le u}^2 &= (e_{m,g})_{<u}^2(\mathbf{1}-b(3^u)+b(2\cdot 3^u))^2\\
&= (e_{m,g})_{<u}(\mathbf{1}\!+\!b(3^u)^2\!+\!b(2\cdot 3^u)^2\!+\!b(3^u)\!-\!b(2\cdot 3^u)\!+\!b(2\cdot 3^u)b(3^u))\\
&= (e_{m,g})_{<u}(\mathbf{1}+ b(3^u)-b(2\cdot 3^u))\\
&+(e_{m,g})_{<u}b(2\cdot 3^u)\left[\binom{m_u+1}{2} + \binom{m_u+1}{1}\psi_{m,u}\right]\\
&+ (e_{m,g})_{<u}b(2\cdot 3^u)\left[2\binom{m_u}{1} - \psi_{m,u}\right]\\
&+ (e_{m,g})_{<u}\left(b(3^u)\left[\binom{m_u+2}{1} + \psi_{m,u}\right]+b(2\cdot 3^u)\right),
\end{align*}
where the final equality holds by \eqref{eq: b3sq}, \eqref{eq: b2sq} and \eqref{eq: b3b2}.
Again the result in this case now follows from Lemma \ref{lem: finalfactor}.
\end{proof}
\begin{remark}
Given $t \in \N,$ we can generalise the definition of $\psi_{m,t}$ when $p$ is an arbitrary prime.
Furthermore, the recursive formula in \eqref{eq: psimtrecursion} generalises in an entirely similar way.
However defining $e_{m,g}$ using the $p$-adic expansion of the binomial coefficient $B(m,g)$ when $p \ge 5$ remains unknown in general.
\end{remark}
\subsection{The elements $e_{m,g}$ are orthogonal and primitive.}\label{sec: primitive} 
Let $g,d \in \N_0$ be such that both $B(m,g)$ and $B(m,d)$ are non-zero modulo 3, and suppose that $g \neq d.$ 
Write 
\begin{align*}
g &=_p [g_0,g_1,g_2,\ldots,g_t]\\ 
d &=_p [d_0,d_1,d_2,\ldots,d_t].
\end{align*}
Let $u$ be minimal such that $g_u \neq d_u,$ and so $(m+2g)_{<u} = (m+2d)_{< u}$ and $(e_{m,g})_{<u} = (e_{m,d})_{<u}.$ 
As in \S \ref{sec: carries}, let $x_{u-1}$ (resp.~$y_{u-1}$) denote the carry leaving column $u-1$ in the ternary addition of $m$ and $g$ (resp.~$d$), recalling that the columns in both $p$-ary additions are indexed starting from 0. 
It follows that $x_{u-1} = y_{u-1},$ and so $(m_u,x_{u-1}) = (m_u,y_{u-1}).$
By Lemma \ref{lem: coltom}, factor $u$ of $B(m,g)_3$ equals $\binom{a}{g_u}$ and factor $u$ of $B(m,d)_3$ equals $\binom{b}{d_u},$ where $a - 2g_u \equiv_3 b-2d_u \equiv_3 m_u + x_{u-1}.$
Moreover, these factors are unequal  since $g_u \neq d_u.$ 
As there are exactly two choices for a factor $\binom{x}{y}$ such that $0 \le y \le x < 3$ and $x-2y \equiv_3 m_u + x_{u-1},$ 
it follows from the construction of $e_{m,g}$ that
\[(e_{m,g})_{\le u} = (e_{m,g})_{< u}w \mbox{ and } (e_{m,d})_{\le u} = (e_{m,d})_{<u}(\mathbf{1}-w),\] 
where $w, \mathbf{1}-w$ are as specified in \textsection \ref{sec: notation}. 
By Proposition \ref{prop: emgu}, $(e_{m,g})_{\le u}$ and $(e_{m,d})_{\le u}$ are idempotents in $S_\F(\lambda),$ and so it follows from Lemma \ref{lem: multonend} that their product is zero.
As $S_\F(\lambda)$ is commutative, this implies $e_{m,g}e_{m,d} = 0.$ 

We now count the number of non-zero $e_{m,g}$ in $S_\F(\lambda).$
By Lemma \ref{lem: smallestinemg}, $e_{m,g}$ is non-zero in $S_\F(\lambda)$ if and only if $g \le \lambda_2.$
Therefore the number of non-zero $e_{m,g}$ in $S_\F(\lambda)$ is equal to 
\[\size{\{g : g \le \lambda_2 \mbox{ and } B(m,g) \mbox{ is non-zero modulo }3\}}.\]
By Theorem 3.3 in \cite{H} this equals the number of indecomposable summands of $M^{\lambda}.$  
It therefore follows that the set of $e_{m,g}$ such that $g \le \lambda_2$ is a complete set of primitive orthogonal idempotents for $S_\F(\lambda).$

\section{The correspondence between idempotents and Young modules}\label{sec: proof2}
Throughout this section let $\lambda = (\lambda_1,\lambda_2)$ and $\mu = (\mu_1,\mu_2)$ be partitions of $r$ satisfying the hypothesis of Theorem \ref{thm: correspondence}. 

We prove Theorem \ref{thm: correspondence} by induction on $r$ by following \cite[\S 7]{DEH}.
The base cases are $r = 0$ and $r = 1.$ 
In both cases the only possibility is $\lambda = \mu = (r,0).$
Therefore in \S \ref{sec: correspondencebase} we consider the case when $\mu = (r,0)$ and $\lambda \in \Lambda(2,r)$ is arbitrary. 
We then complete the inductive step in \S \ref{sec: correspondenceinductive}.

\subsection{The case $\mu = (r,0)$}\label{sec: correspondencebase}
We distinguish two cases determined by $\lambda.$

If $\lambda = (r,0),$ then $M^{(r,0)}$ is indecomposable and the only primitive idempotent in $S_\F((r,0))$ is $\mathbf{1}.$
In this case $B(m,g) = \binom{r}{0},$ and so 
\[B(m,g)_3 = \binom{r_0}{0}\ldots \binom{r_t}{0},\]
where $r =_3 [r_0,\ldots, r_t].$
By construction, for some $\alpha_i \in \F_3,$
\[e_{r,0} = \mathbf{1} + \sum_{i > 0} \alpha_i b(i) = \mathbf{1} \in S_\F((r,0)),\]
as required. 
Observe that this proves the base case of the induction.

Recall from \textsection \ref{sec: maintheorems} that $1_\lambda$ is an idempotent in $S_\F(2,r)$ such that $1_\lambda E^{\otimes r} = M^\lambda.$
If $\lambda = (m+g,g) \vdash r,$ then we show that there exist $u, v \in S_\F(2,r)$ such that $uv = e_{m,g}$ and $vu = 1_{(r,0)}.$
Then $e_{m,g}$ and $1_{(r,0)}$ are idempotents such that $e_{m,g} = u1_{(r,0)}v$ and $1_{(r,0)} = ve_{m,g}u.$
It follows from \cite[(1.1)]{Scott} that $e_{m,g}M^\lambda = e_{m,g}E^{\otimes r} \cong 1_{(r,0)}\nat = M^{(r,0)} = Y^{(r,0)},$ 
as required. 
Now define 
\[u = B(m,g) 1_{\lambda}f^{(g)}1_{(r,0)} \mbox{ and }v = 1_{(r,0)}e^{(g)}1_\lambda.\]
In order to calculate $uv$ and $vu$, we follow parts (b) and (c) in the proof of \cite[Proposition 7.2]{DEH}. 
Indeed define the \emph{simple root $\alpha = (1,-1)$}. 
By Theorem 2.4 in \cite{DG2} if $\nu \in \Lambda(2,r)$, then 
\begin{align*}
e\,1_\nu &= \begin{cases}
1_{\nu + \alpha} \ e & \text{ if $\nu + \alpha$ is a composition,}\\
0 & \text{ otherwise }
\end{cases}\\
f\,1_\nu &= \begin{cases}
1_{\nu - \alpha} \ f & \text{ if $\nu - \alpha$ is a composition,}\\
0 & \text{ otherwise.}
\end{cases}
\end{align*}
Moreover, Proposition 4.3 in \cite{DG2} states that $H_i 1_\lambda = \lambda_i 1_\lambda$ for $i \in \{1,2\}.$
Define $h = H_1 - H_2,$  and so $h1_\lambda = m\, 1_\lambda$. 
Since $(r,0) + (1,-1)$ is not a composition, the above relations give $e^{(a)}\, 1_{(r,0)} = 0$ for all $a \in \N.$ 
Also with $\lambda = (m+g,g),$ we have
\begin{equation}\label{eq: relations}
\begin{array}{c c c}
e^{(g)}\, 1_\lambda = 1_{(r,0)}\, e^{(g)}, & 1_{(r,0)}\, f^{(g)} = f^{(g)} \, 1_\lambda, & \binom{h}{g}\, 1_{(r,0)} = \binom{r}{g} \, 1_{(r,0)}. 
\end{array}
\end{equation}
It follows from the relations in \eqref{eq: relations} and Lemma \ref{lem: smallestinemg} that
\begin{align*}
uv &= B(m,g)\, 1_\lambda f^{(g)}1_{(r,0)}e^{(g)}1_\lambda\\
&= B(m,g)\, 1_\lambda f^{(g)}e^{(g)}1_\lambda\\ 
&= B(m,g)\, b(g) = e_{m,g} \in S_\F((m+g,g)).
\end{align*}
Also it follows from the relations in \eqref{eq: relations} and \cite[\S 26.2]{Humphreys} that 
\begin{align*}
vu &= B(m,g) 1_{(r,0)}e^{(g)}1_\lambda f^{(g)}1_{(r,0)}\\
&= B(m,g) 1_{(r,0)}e^{(g)}f^{(g)}1_{(r,0)}\\
&= 1_{(r,0)}[\sum_{j=0}^{g} f^{(g-j)}{\textstyle\binom{h-2g+2j}{j}}\displaystyle e^{(g-j)}]1_{(r,0)}\\
&= 1_{(r,0)}[f^{(0)}\binom{h}{g}e^{(0)}]1_{(r,0)}\\
&= \binom{r}{g} 1_{(r,0)} = (B(m,g))^2 1_{(r,0)} \equiv_3 1_{(r,0)},
\end{align*}
where the congruence holds as $B(m,g)$ is non-zero modulo 3.
\subsection{The inductive step}\label{sec: correspondenceinductive}
Assume throughout this section that the statement of Theorem \ref{thm: correspondence} holds inductively for all partitions in $\Lambda(2,r)$ for some $r \in \N_0.$
Let $\widetilde{\lambda}$ and $\widetilde{\mu}$ be partitions of $r+2$ with at most two parts satisfying the hypothesis of the theorem.
The argument for the case when $\widetilde{\mu}_2 = 0$ is given in \S \ref{sec: correspondencebase}, so assume that $\widetilde{\mu}_2 > 0.$
Then $\widetilde{\lambda} = \lambda + (1^2)$ and $\widetilde{\mu} = \mu+(1^2),$ where $\lambda$ and $\mu$ are the partitions of $r$ such that $m = \lambda_1 - \lambda_2$ and $g = \lambda_2-\mu_2.$ 
The inductive step is complete once we prove Proposition \ref{prop: endgame} below, which is equivalent to Theorem 7.3 in \cite{DEH}. 
To this end define the map $j : \nat \rightarrow E^{\otimes r+2}$ by 
\[x \mapsto (v_1 \otimes v_2 - v_2 \otimes v_1)\otimes x,\]
where we remind the reader that $\{v_1,v_2\}$ is a fixed basis of $E.$ 
Observe that $j$ is injective.
Also it follows from the definition of $M^\lambda$ given in \S \ref{sec: maintheorems} that $j(M^\lambda) \subset M^{\lambda + (1^2)}.$  
We then have the following lemma.
\begin{lemma}\label{lem: commutingj}
Given $x \in M^\lambda,$ we have $je_{m,g}(x) = e_{m,g}j(x).$
\end{lemma}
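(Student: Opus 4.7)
The plan is to reduce the lemma to a calculation involving only the single tensor $\omega:=v_1\otimes v_2-v_2\otimes v_1$. Since $e_{m,g}$ is a polynomial (with integer coefficients) in the elements $b(i)=1f^{(i)}e^{(i)}1$, and since the formal expression defining $e_{m,g}$ is the same in $S_\F(\lambda)$ and in $S_\F(\lambda+(1^2))$, it is enough to verify $j\,b(i)(x)=b(i)\,j(x)$ for every $i\ge 0$ and $x\in M^\lambda$, where on the left $b(i)=1_\lambda f^{(i)}e^{(i)}1_\lambda\in S_\F(2,r)$ and on the right $b(i)=1_{\lambda+(1^2)}f^{(i)}e^{(i)}1_{\lambda+(1^2)}\in S_\F(2,r+2)$.

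The central observation is that $\omega$ is annihilated by both $e=e_{21}$ and $f=e_{12}$, while $H_k(\omega)=\omega$ for $k=1,2$. I would first verify these four identities by direct calculation. Combined with the integral coproduct formula $\Delta(e^{(i)})=\sum_{j+k=i} e^{(j)}\otimes e^{(k)}$ and its analogue for $f$, a single-term simplification yields
\[
 e^{(i)}(\omega\otimes y)=\omega\otimes e^{(i)}(y),\qquad f^{(i)}(\omega\otimes y)=\omega\otimes f^{(i)}(y),
\]
for every $y\in E^{\otimes r}$. The analogous computation for $H_k$, using $\Delta(H_k)=H_k\otimes 1+1\otimes H_k$, gives $H_k(\omega\otimes y)=\omega\otimes(H_k+1)(y)$, so that for any polynomial $P$,
\[
 P(H_1,H_2)(\omega\otimes y)=\omega\otimes P(H_1+1,H_2+1)(y).
\]

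Specialising the last display to $P=\binom{H_1}{\lambda_1+1}\binom{H_2}{\lambda_2+1}$ and to $y$ of weight $\lambda$ (on which $H_k$ acts as the scalar $\lambda_k$), every factor reduces to $\binom{\lambda_k+1}{\lambda_k+1}=1$. Hence $1_{\lambda+(1^2)}(\omega\otimes y)=\omega\otimes y$ whenever $y\in M^\lambda$; in particular $j$ lands in $M^{\lambda+(1^2)}$ and $1_{\lambda+(1^2)}$ fixes $j(x)$ pointwise. Stringing the pieces together gives
\[
 b(i)\,j(x)=1_{\lambda+(1^2)}f^{(i)}e^{(i)}1_{\lambda+(1^2)}(\omega\otimes x)=\omega\otimes f^{(i)}e^{(i)}(x)=j\bigl(1_\lambda f^{(i)}e^{(i)}1_\lambda\,x\bigr)=j\bigl(b(i)\,x\bigr),
\]
where I use $1_\lambda x=x$ and the fact that $f^{(i)}e^{(i)}(x)$ again has weight $\lambda$, so that the outer $1_{\lambda+(1^2)}$ acts trivially on $\omega\otimes f^{(i)}e^{(i)}(x)$.

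The main technical care is keeping the divided powers straight in characteristic $3$; this is handled uniformly by viewing everything inside the Kostant $\Z$-form and reducing. Once the commutation $j\,b(i)=b(i)\,j$ is in hand, multiplicativity upgrades it to any polynomial in the $b(i)$'s, and therefore to $e_{m,g}$, which is precisely the content of the lemma.
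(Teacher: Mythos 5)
Your proposal is correct and takes essentially the same route as the paper: both arguments reduce the claim to $j\,b(i)(x)=b(i)\,j(x)$ for each $i$ and rest on the key observation that $e$ and $f$ annihilate $v_1\otimes v_2-v_2\otimes v_1$, so that $f^{(i)}e^{(i)}(\omega\otimes x)=\omega\otimes f^{(i)}e^{(i)}(x)$. The only cosmetic difference is that you dispose of the idempotents $1_{\lambda+(1^2)}$ by an explicit computation with $\binom{H_k}{\ell}$ and the coproduct, whereas the paper simply notes that $1_{\lambda+(1^2)}$ is the projection onto the summand $M^{\lambda+(1^2)}$, which is preserved by $f^{(i)}e^{(i)}$.
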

\begin{proof}
We prove that $jb(a)(x) = b(a)j(x)$ for all $x \in M^\lambda$ and $a \in \N_0.$  
Note that on the left hand side of this equality $b(a)$ is viewed as an element of $S_{\F}(\lambda),$ and on the right hand side it is viewed as an element of $S_{\F}(\lambda+(1^2)).$ 

The Lie algebra action of $e$ on $v_1 \otimes v_2 - v_2 \otimes v_1$ is as follows:
\begin{align*}
e (v_1 \otimes v_2 - v_2 \otimes v_1) &= (ev_1\otimes v_2 + v_1\otimes ev_2) - (ev_2 \otimes v_1 + v_2\otimes ev_1)\\ 
&= v_1\otimes v_1 - v_1\otimes v_1\\ 
&= 0.
\end{align*}
Similarly $f(v_1 \otimes v_2 - v_2 \otimes v_1) = 0,$ and so $j$ commutes with the action of $e^{(a)}$ and $f^{(a)}$ for all $a \in \N.$ 
Also considering the Lie algebra action of the product $f^{(a)}e^{(a)}$ on $M^{\lambda}$ and $M^{\lambda+{(1^2)}}$ shows that $f^{(a)}e^{(a)}$ preserves $M^{\lambda}$ and $M^{\lambda + (1^2)}$. 
As $1_\lambda$ and $1_{\lambda+(1^2)}$ are the projections onto $E^{\otimes r}$ corresponding to $M^\lambda$ and $M^{\lambda+(1^2)}$, respectively, it follows that
\begin{align*}
j(b(a)x) &= j(1_\lambda f^{(a)}e^{(a)} 1_\lambda x)\\
&= j(f^{(a)}e^{(a)} x)\\
&=  (v_1 \otimes v_2 - v_2 \otimes v_1) \otimes f^{(a)}e^{(a)} x,
\end{align*}
and 
\begin{align*}
b(a)j(x) &= (1_{\lambda+(1^2)} f^{(a)}e^{(a)} 1_{\lambda+(1^2)})((v_1 \otimes v_2 - v_2 \otimes v_1) \otimes x)\\
&= 1_{\lambda+(1^2)}(0 + (v_1 \otimes v_2 - v_2 \otimes v_1) \otimes f^{(a)}e^{(a)} x)\\
&=  (v_1 \otimes v_2 - v_2 \otimes v_1) \otimes f^{(a)}e^{(a)} x.
\end{align*}
Therefore $jb(a)x = b(a)j(x),$ as required.
\end{proof}
Before we state and prove Proposition \ref{prop: endgame}, we introduce the following notation.
Given $\boldsymbol{i} = (i_1,i_2,\ldots,i_r) \in I(2,r),$ define $v_{\boldsymbol{i}} = v_{i_1} \otimes v_{i_2} \otimes \cdots \otimes v_{i_r}.$ 
\begin{proposition}\label{prop: endgame}
Suppose that $e_{m,g}M^\lambda \cong Y^\mu.$ 
Then we have $e_{m,g}M^{\lambda + (1^2)} \cong Y^{\mu + (1^2)}$. 
\end{proposition}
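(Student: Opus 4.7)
The plan is to use the injective $\F S_r$-equivariant map $j$ from Lemma~\ref{lem: commutingj} to transport the inductive hypothesis from level $r$ to level $r+2$, and then to identify the resulting Young module via James' Submodule Theorem.

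Setting up the framework, by Lemma~\ref{lem: commutingj} the map $j$ commutes with the action of $e_{m,g}$, and hence restricts to an injective $\F S_r$-homomorphism
\[j \colon e_{m,g}M^\lambda \hookrightarrow e_{m,g}M^{\lambda+(1^2)}.\]
By the inductive hypothesis the domain is isomorphic to $Y^\mu$. By Theorem~\ref{thm: main} applied to $\lambda+(1^2)$, $e_{m,g}$ is a primitive idempotent in $S_\F(\lambda+(1^2))$, so the codomain $e_{m,g}M^{\lambda+(1^2)}$ is an indecomposable direct summand of $M^{\lambda+(1^2)}$, necessarily of the form $Y^{\tilde\mu}$ for some two-part partition $\tilde\mu \vdash r+2$ with $\tilde\mu \unrhd \lambda+(1^2)$. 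The task is therefore to show $\tilde\mu = \mu+(1^2)$.

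My plan for the identification is to exhibit a copy of the Specht module $S^{\mu+(1^2)}$ inside $e_{m,g}M^{\lambda+(1^2)}$; James' Submodule Theorem then forces $Y^{\tilde\mu} = Y^{\mu+(1^2)}$, since $Y^{\mu+(1^2)}$ is the unique indecomposable summand of $M^{\lambda+(1^2)}$ admitting $S^{\mu+(1^2)}$ as a submodule. Fix a nonzero polytabloid $e_t$ generating $S^\mu$, viewed as a submodule of $e_{m,g}M^\lambda \cong Y^\mu$ via the standard embedding $S^\mu \hookrightarrow M^\lambda$ for $\mu \unrhd \lambda$. Let $\tilde t$ be the $(\mu+(1^2))$-tableau obtained from $t$ by prepending a new first column with entries $1$ on top and $2$ below. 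The key identity to verify is that, up to sign, $j(e_t)$ coincides with the polytabloid $e_{\tilde t}$ generating $S^{\mu+(1^2)} \subseteq M^{\lambda+(1^2)}$: heuristically, the column stabiliser of $\tilde t$ factorises as the $S_2$-antisymmetriser on the new first column times the column stabiliser of $t$ acting on the remaining columns, and the alternating tensor $v_1 \otimes v_2 - v_2 \otimes v_1$ supplies precisely the first factor while $e_t$ supplies the second. Granted this identity, the $\F S_{r+2}$-submodule of $e_{m,g}M^{\lambda+(1^2)}$ generated by $j(e_t)$ is a copy of $S^{\mu+(1^2)}$, and we conclude $\tilde\mu = \mu+(1^2)$ as required.

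The main obstacle is the rigorous verification of the polytabloid identity $j(e_t) = \pm\, e_{\tilde t}$. The embeddings of $S^\mu$ into $M^\lambda$ and of $S^{\mu+(1^2)}$ into $M^{\lambda+(1^2)}$ are mediated by tabloid-splitting maps from $M^\mu$ and $M^{\mu+(1^2)}$, so matching the two expansions as alternating sums of $\lambda$- and $(\lambda+(1^2))$-tabloids requires careful bookkeeping with signs, with the splitting maps, and with the indexing of the column stabilisers before and after prepending the column. One must also confirm that $e_{\tilde t}$ is nonzero in characteristic $3$, to ensure it genuinely generates $S^{\mu+(1^2)}$. The calculation is elementary but delicate; since the paper indicates that this step is lifted directly from \cite[Theorem 7.1 and \S 7]{DEH}, no essentially new combinatorial input beyond the setup there should be needed.
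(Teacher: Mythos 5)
Your proposal follows essentially the same route as the paper: use the injection $j$ together with Lemma~\ref{lem: commutingj} to push the Specht submodule $S^\mu \subseteq e_{m,g}M^\lambda$ up to a copy of $S^{\mu+(1^2)}$ inside the indecomposable summand $e_{m,g}M^{\lambda+(1^2)}$, and then identify that summand via the uniqueness (one-dimensionality of $\Hom_{\F S_{r+2}}(S^{\mu+(1^2)},M^{\lambda+(1^2)})$, which comes from James' Theorem~13.13 rather than the Submodule Theorem per se) of the embedded Specht module. The polytabloid identity $j(\varepsilon_{t_1})=\varepsilon_{t_2}$ that you defer is exactly the computation the paper carries out, realising the polytabloids inside $M^\lambda$ and $M^{\lambda+(1^2)}$ as weight-space sums $\omega_1,\omega_2$ and using $\{C_{t_2}\}^-=(1-(1\ 2))\{C_{t_1}\}^-$ together with $(v_2\otimes v_2\otimes\omega)(1-(1\ 2))=0$.
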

\begin{proof}
Since $M^\lambda$ has a direct summand isomorphic to $Y^\mu,$ there is a submodule of $M^\lambda$ isomorphic to the Specht module $S^\mu.$
Moreover, $\F$ is a field of characteristic 3 and both $\lambda$ and $\mu$ have at most two parts, and so Theorem 13.13 in \cite{J} implies that $\Hom_{\F S_r}(S^\mu, M^{\lambda})$ is one-dimensional.
Equivalently the copy of $S^\mu$ in $M^{\lambda}$ is unique,
and the analogous statement holds for $S^{\mu + (1^2)}$ and $M^{\lambda + (1^2)}$ by the same argument. 
By the defining property of the Young module $Y^\mu,$ it sufficient to prove that if $e_{m,g}(S^\mu) \neq 0,$ then $e_{m,g}(S^{\mu+(1^2)}) \neq 0.$
We do this using polytabloids (see \cite[Chapter 4]{J} for details).

Write $u$ for $\mu_2+1.$ 
Let $t_1$ and $t_2$ respectively denote the following standard $\mu$ and $\mu+(1^2)$-tableaux:
$$t_1 = \scalebox{0.9}{\ytableausetup
{mathmode, boxsize=2em}
\begin{ytableau}
\scriptstyle 3 &\scriptstyle 5 & \none[\dots]
& \scriptstyle 2u - 1 &\scriptstyle 2u + 1 & \none[\dots]
&\scriptstyle r+2\\
\scriptstyle 4 &\scriptstyle 6& \none[\dots]
& \scriptstyle 2u
\end{ytableau}}\hspace{15pt}t_2= \scalebox{0.9}{\begin{ytableau}
\scriptstyle 1 &\scriptstyle 3 & \none[\dots]
& \scriptstyle 2u - 1 &\scriptstyle 2u + 1 & \none[\dots]
&\scriptstyle r+2\\
 \scriptstyle 2 & \scriptstyle 4& \none[\dots]
& \scriptstyle 2u
\end{ytableau}}.$$
Write $R_{t_i}$ for the row stabiliser of each $t_i.$
Also write $C_{t_i}$ for the column stabiliser of each $t_i,$ and define $\{C_{t_i}\}^- = \sum_{\pi \in C_{t_i}} \sgn(\pi)\pi.$ 
It is easy to see that $\{C_{t_2}\}^- = (1 - (1\ 2))\{C_{t_1}\}^-.$  

Observe that the column stabiliser of $t_1$ is a subgroup of the symmetric group on $\{3,4,\ldots,r+2\}.$ 
Thus given $\sigma \in \Sym(\{3,4,\ldots,r+2\}),$ we define $\sigma^\star \in \Sym(\{1,2,\ldots,r\})$ to be the permutation such that $\sigma^\star(\ell) = \sigma(\ell+2)-2$ for $1 \le \ell \le r.$
Then there is a natural action of $\sigma \in \Sym(\{3,4,\ldots,r+2\})$ on $x \in M^\lambda$ given by $x\sigma = x\sigma^\star.$ 

Let $\omega_1 = \sum v_{\boldsymbol{i}},$ where the sum runs over all $\boldsymbol{i} \in I(2,r)$ such that $\boldsymbol{i}$ has weight $\lambda$ and $i_{\rho} = 2$ whenever $\rho+2$ is in the second row of $t_1.$
Observe that $\omega_1$ is fixed by $R_{t_1},$
and so define the \emph{polytabloid} $\varepsilon_{t_1} = \omega_1 \{C_{t_1}\}^-.$
Note that the actions of $R_{t_1}$ and $\{C_{t_1}\}^-$ on $\omega_1$ are as defined in the previous paragraph.
Then $\varepsilon_{t_1}$ generates the unique copy of $S^\mu$ in $M^\lambda.$

Similarly let $\omega_2 =  \sum v_{\boldsymbol{i}},$ where the sum runs over all $\boldsymbol{i} \in I(2,r+2)$ such that $\boldsymbol{i}$ has weight $\lambda$ and $i_\rho = 2$ whenever $\rho$ is in the second row of $t_2.$ 
Again $\omega_2$ is fixed by $R_{t_2},$ and so 
define the polytabloid $\varepsilon_{t_2} = \omega_2 \{C_{t_2}\}^-.$
Note that the actions of $R_{t_2}$ and $\{C_{t_2}\}^-$ on $\omega_2$ are given by the usual place permutation defined in \S \ref{sec: maintheorems}. 
Then $\varepsilon_{t_2}$ generates the unique copy of $S^{\mu + (1^2)}$ in $M^{\lambda+(1^2)}.$ 
By definition of $j$  
\begin{align*}
j(\varepsilon_{t_1}) &=  (v_1\otimes v_2 - v_2 \otimes v_1)\otimes\varepsilon_{t_1} \\
&= ((v_1\otimes v_2 - v_2 \otimes v_1)\otimes \omega_1) \{C_{t_1}\}^-,
\end{align*}
where in the final line the action of $\{C_{t_1}\}^-$ is again by the usual place permutation defined in \S \ref{sec: maintheorems}.

Observe that 
\[\omega_2 = v_1 \otimes v_2 \otimes \omega_1 + v_2\otimes v_2 \otimes \omega,\]
where $\omega$ is the sum of the $v_{\boldsymbol{i}}$ such that $\boldsymbol{i} \in I(2,r)$ has weight $\lambda + (1,-1),$ and $i_\rho = 2$ whenever $\rho+2$ is in the second row of $t_1.$
(In the case that $\lambda_2 = 0,$ we have $\omega = 0.$)
Then since $v_1\otimes v_2 - v_2 \otimes v_1 = (v_1 \otimes v_2)(1 - (1\ 2)),$ we have
\begin{align*}
j(\varepsilon_{t_1}) &= ((v_1\otimes v_2 - v_2 \otimes v_1)\otimes \omega_1) \{C_{t_1}\}^-\\
&=  (v_1\otimes v_2\otimes \omega_1)(1 - (1\ 2))\{C_{t_1}\}^-\\
&= (v_1 \otimes v_2 \otimes \omega_1 + v_2 \otimes v_2 \otimes \omega)(1 - (1\ 2))\{C_{t_1}\}^-\\
&= {\omega}_2\{C_{t_2}\}^- = \varepsilon_{t_2},
\end{align*}
where the third equality holds since $(v_2 \otimes v_2 \otimes \omega)(1 - (1\ 2)) = 0.$ 

If $e_{m,g}(S^\mu)\neq 0$, then $e_{m,g}(\varepsilon_{t_1})\neq 0$. 
The map $j$ is injective and so $j(e_{m,g}(\varepsilon_{t_1}))\neq 0.$ 
It now follows from Lemma \ref{lem: commutingj} that 
\[e_{m,g}(\varepsilon_{t_2}) = e_{m,g}(j(\varepsilon_{t_1})) = j(e_{m,g}(\varepsilon_{t_1})) \neq 0,\] 
and so $e_{m,g}(S^{\mu+(1^2)}) \neq 0.$
Therefore $e_{m,g}(Y^{\mu+(1^2)}) \neq 0,$ which completes the proof.
\end{proof}
\section*{Acknowledgements} 
This paper was completed by the author under the supervision of Professor Mark Wildon.
The author gratefully acknowledges his support. 
The author also thanks an anonymous referee for their careful reading of an earlier version of this paper. 
The author was funded by EPSRC grant EP/M507945/1. 
\bibliographystyle{amsplain}
\bibliography{../thesis/thesis}
\end{document}